\documentclass[11pt]{article}
\usepackage{mathrsfs}
\usepackage{amsmath, mathtools}
\usepackage{amssymb}
\usepackage{amsthm}
\usepackage{graphicx}
\usepackage{epic}
\usepackage{xcolor,cite}
\usepackage{cases}
\usepackage{pst-poly}
\usepackage{pst-plot}
\usepackage{CJK,makecell}

\usepackage{verbatim}
\usepackage{subfig}
\usepackage{booktabs}
\definecolor{dkgreen}{rgb}{0,0.6,0}
\numberwithin{equation}{section}

\usepackage{etex}

\usepackage[left,mathlines,displaymath]{lineno}

\usepackage{pgf}
\renewcommand{\paragraph}{\roman{paragraph}}

\usepackage{bm}
\usepackage[hidelinks]{hyperref}
\usepackage{tikz}
\usetikzlibrary{automata}
\usepackage{enumitem}
\usetikzlibrary{calc}
\usetikzlibrary{arrows,shapes,positioning}
\usetikzlibrary{decorations.markings}
\tikzstyle arrowstyle=[scale=1]
\tikzstyle directed=[postaction={decorate,decoration={markings, mark=at position .65 with {\arrow[arrowstyle]{stealth}}}}]
\tikzstyle reverse directed=[postaction={decorate,decoration={markings, mark=at position .65 with {\arrowreversed[arrowstyle]{stealth};}}}]

\newtheorem{theorem}{Theorem}[section]
\newtheorem{corollary}[theorem]{Corollary}

\newtheorem{conjecture}[theorem]{Conjecture}

\newtheorem{lemma}[theorem]{Lemma}

\newcommand{\JCTB}{{\it J. Combin. Theory Ser. B.} }

\newcommand{\st}{\operatorname{st}}
\newcommand{\tr}{\operatorname{tr}}
\newcommand{\ex}{\operatorname{ex}}

\def \Forb {\mathrm{Forb}}

\usepackage{color}

\usepackage{authblk}%%%for authors

\begin{document}

\title{A Tur\'an-type extremal problem for the number of spanning trees in $C_4$-free graphs}

\author[1,2\thanks{Corresponding author. Email: xush0928@163.com}]
{Shaohan Xu}

\author[3\thanks{Email: fengming.dong@nie.edu.sg and donggraph@163.com.}]
{Fengming Dong}

\author[1,2\thanks{Email: kexxu1221@126.com}]
{Kexiang Xu}

\affil[1]{\footnotesize School of Mathematics,
	Nanjing University of Aeronautics and Astronautics,
	Nanjing, Jiangsu, 210016, PR China}

\affil[2]
{\footnotesize
MIIT Key Laboratory of Mathematical Modelling and High Performance Computing of Air  Vehicles, Nanjing, Jiangsu, 210016, PR China
}

\affil[3]{\footnotesize National Institute of Education, Nanyang Technological University, Singapore}

\date{}

\maketitle

\begin{abstract}
For a graph \(F\), the Tur\'an number \(\ex(n,F)\) is the maximum number of edges in an \(F\)-free graph on \(n\) vertices. Let \(q\ge 2\) be an integer and set \(n=q^{2}+q+1\). Brown and Erd\H{o}s, R\'enyi and S\'os independently proved that $\ex(n,C_{4})\ge \frac12 q(q+1)^{2}$ for every prime power \(q\), and F\"uredi subsequently established the upper bound $\frac12 q(q+1)^{2}$ for $\ex(n,C_{4})$ whenever \(q\notin\{1, 7,9,11,13\}\). In this article, we prove that every \(C_{4}\)-free graph \(G\) on \(n\) vertices with at most \(\frac12 q(q+1)^{2}\) edges satisfies $\tau(G)\le n^{(n-3)/2}$, where \(\tau(G)\) denotes the number of spanning trees of \(G\). In particular, for every prime power $q\notin\{7,9,11,13\}$, the above upper bound on $\tau(G)$ is attained precisely by the orthogonal polarity graphs, thereby proving London's conjecture for all such $q$.
\end{abstract}

\noindent{\bf Keywords:} spanning trees, $C_4$-free graphs, extremal graph theory, polarity graphs, Tur\'an-type problems\\

\noindent{{\bf 2020 Mathematics Subject Classification:} 05C30, 05C35, 05C50, 51E20}

%\tableofcontents\Large

\section{Introduction}
Throughout the paper, all graphs are finite and simple unless stated otherwise. Let $G$ be a graph with vertex set $V(G)$ and edge set $E(G)$. We write $e(G)=|E(G)|$ for the number of edges in $G$. For a vertex $v\in V(G)$, let $N_G(v)$ denote the {\it neighborhood} of $v$ in $G$, and let $d_G(v)=|N_G(v)|$ denote its {\it degree}. When the underlying graph is clear from the context, we simply write $N(v)$ and $d_v$ for $N_G(v)$ and $d_G(v)$, respectively.

A graph is called $F$-{\it free} if it contains no subgraph isomorphic to the graph $F$. For any positive integer $n$, let $\Forb(n, F)$ denote the set of $F$-free graphs of order $n$. The {\it Tur\'an number} of $F$, denoted by $\ex(n,F)$, is the maximum number of edges among all graphs in $\Forb(n,F)$. One of the fundamental examples in extremal graph theory is Mantel's theorem \cite{Mantel1907}, which states that
\[\ex(n,C_3)=\left\lfloor\frac{n^2}{4}\right\rfloor,\]
with equality attained by the balanced complete bipartite graph $K_{\lfloor n/2\rfloor,\lceil n/2\rceil}$. Interestingly, the same graph also maximizes the number of spanning trees among all $n$-vertex $C_3$-free graphs. More generally, for integers $n$ and $r$ with $2\le r\le n$, combining Tur\'an's theorem with the spanning tree extremal result of Petingi and Rodriguez \cite{PetingiRodriguez2002}, one obtains that, among all $n$-vertex $K_{r+1}$-free graphs, the Tur\'an graph $T_r(n)$, namely the balanced complete $r$-partite graph, maximizes the number of spanning trees.

The corresponding Tur\'an problem for $C_4$-free graphs has a long history. The study of $\ex(n,C_4)$ goes back to Erd\H{o}s \cite{Erdos1938}.  Reiman \cite{Reiman1958} obtained a general upper bound $\ex(n,C_4)\le \frac{n}{4}\left(1+\sqrt{4n-3}\right)$. On the other hand, using polarities of finite projective planes, Brown
\cite{Brown1966} and Erd\H{o}s, R\'enyi and S\'os \cite{ErdosRenyiSos1966} independently obtained
\[
\ex(q^2+q+1,C_4)\ge \frac12 q(q+1)^2
\]
for every prime power $q$. Erd\H{o}s conjectured that this lower bound is best possible for all prime powers $q$. F\"uredi \cite{Furedi1983,Furedi1996} subsequently established the  corresponding upper bound for all positive integers $q$ with $q\notin\{1,7,9,11,13\}$; see also \cite[Theorem~1.1]{HMY2021}.

\begin{theorem}[\hspace{1sp}{\cite{Furedi1983,Furedi1996}}]\label{th-1}
If $q$ is a positive integer with $q\notin\{1,7,9,11,13\}$ and $n=q^2+q+1$, then
\[
\ex(n,C_4)\le \frac12q(q+1)^2.
\]
\end{theorem}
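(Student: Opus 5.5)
Since Theorem~\ref{th-1} is F\"uredi's theorem, the plan is to reconstruct its proof: counting with Cauchy--Schwarz, then a structural stability analysis of the near-extremal case. Let $G$ be $C_4$-free on $n=q^2+q+1$ vertices and suppose, for contradiction, that $e(G)>\frac12 q(q+1)^2$. The basic tool is that every pair of vertices has at most one common neighbour. Hence
\[
\sum_{v}\binom{d_v}{2}\le \binom{n}{2}=\frac{(q^2+q+1)(q^2+q)}{2}.
\]
With average degree $\bar d=2e(G)/n$, convexity gives $n\binom{\bar d}{2}\le\binom n2$, i.e.\ $\bar d(\bar d-1)\le n-1=q(q+1)$, so $\bar d\le q+1$ and $e(G)\le \frac12 n(q+1)$. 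This is Reiman's bound, and it exceeds the target $\frac12 q(q+1)^2$ by $\frac12(q+1)$. The whole difficulty is removing this linear slack.

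\textbf{Step 1: counting the slack.} Write $d_v=q+1+\varepsilon_v$ and track the deficit $\binom n2-\sum_v\binom{d_v}{2}$, which counts the pairs with no common neighbour. Assuming $e(G)\ge\frac12q(q+1)^2+1$, Cauchy--Schwarz forces almost all degrees to equal $q$ or $q+1$, and almost all pairs to have exactly one common neighbour. Thus $G$ looks like the polarity graph of a projective plane.

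\textbf{Step 2: the loop count.} In a true polarity graph the $q+1$ absolute points have degree $q$, and every other vertex has degree $q+1$. In general, attach a loop to each vertex $v$ whose neighbourhood ``misses'' the pair pattern, i.e.\ to each vertex of degree $q$. If the resulting multigraph $G^\circ$ has every pair joined by exactly one path of length two, then its adjacency matrix $A$ satisfies $A^2=J+qI$. Consequently $A$ has eigenvalue $q+1$ and eigenvalues $\pm\sqrt q$, and since $\mathrm{tr}\,A$ equals the number of loops, the loop count is pinned down. One then compares with $2e(G)=\sum d_v$ to obtain $e(G)\le\frac12q(q+1)^2$.

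\textbf{Step 3: the imperfect case (main obstacle).} Most of the work is showing that any $C_4$-free graph with more than $\frac12q(q+1)^2$ edges can be completed to such a structure, or else loses enough edges to contradict the count. F\"uredi handles this by a careful analysis of vertices of degree $\ge q+2$ and of the ``uncovered'' pairs. A vertex of degree $q+2$ forces many pairs inside $N(v)$ to be covered by $v$ alone. Double counting the pairs $\{x,y\}$ with $x,y$ in neighbourhoods of high-degree vertices then yields an inequality of the shape $\sum_v \varepsilon_v(\varepsilon_v+\cdots)\le$ (number of uncovered pairs). Integrality plus a quadratic inequality in $q$ rules out $e(G)>\frac12q(q+1)^2$ except for small $q$. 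The exceptional values $q\in\{1,7,9,11,13\}$ are exactly where this final integrality/quadratic argument does not close, as reflected in the hypothesis. I expect this last step to be the hardest: a careful case analysis on the degree sequence near the extremal configuration, where I would first try to show that a vertex of degree $\ge q+2$ forces at least $q$ uncovered pairs.
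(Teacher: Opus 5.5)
The paper gives no proof of this statement. It quotes F\"uredi's theorem and uses it only as a black box, to pass from Corollary~\ref{cor:edge-bound} to Corollary~\ref{cor:nonexceptional-orders}, so here the correct ``proof'' is the citation. As a self-contained argument, your proposal has genuine gaps. Its only rigorous part is Reiman's count, and your Step~1 claim that this count forces almost all degrees into $\{q,q+1\}$ is false. Writing $d_v=q+1-r_v$, the inequality $\sum_v\binom{d_v}{2}\le\binom n2$ is equivalent to $\sum_v r_v^2\le(2q+1)\sum_v r_v$, and to nothing more. For $q=4$ ($n=21$), take $12$ vertices of degree $4$ and $9$ of degree $6$. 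Then $e=51>m_4=50$ and $\sum_v\binom{d_v}{2}=207\le210=\binom{21}{2}$, yet no vertex has degree $q+1$. So convexity does not give near-polarity structure; obtaining that structure is the hard part.

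Step~2 also does not close. If $A^2=qI+J$, then $\operatorname{tr}A=q+1+\sqrt q\,(s-t)$, where $s,t$ are the multiplicities of $\pm\sqrt q$. This fixes the loop count at $q+1$ only when $q$ is not a perfect square. For square $q$ it is genuinely undetermined, since unitary polarities have $q\sqrt q+1$ absolute points. What you need is Baer's inequality that every polarity has at least $q+1$ absolute points, and that needs its own proof. This perfect case is also not what your contradiction hypothesis hands you.

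Step~3 must show that a $C_4$-free graph with more than $m_q$ edges either completes to such a structure or loses enough edges. It is only described, not carried out, and the inequality ``of the shape $\sum_v\varepsilon_v(\varepsilon_v+\cdots)$'' is never derived. Your proposed lemma, that a vertex of degree $\ge q+2$ forces at least $q$ uncovered pairs, cannot come from counting alone. In the degree sequence above, counting permits nine such vertices with only $210-207=3$ uncovered pairs in total.

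Finally, your explanation of the exceptional set is inaccurate. As the result is usually assembled in the literature, it combines F\"uredi's argument for even $q$, his later argument for $q>13$, and separate computer verification for $q=3,5$. The case $q=1$ is simply false, since $K_3$ has $3>2$ edges. The values $7,9,11,13$ are the odd orders that none of these reaches, not points where a single integrality argument fails.
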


Thus the exact value of $\ex(q^2+q+1,C_4)$ is known for every prime power $q\notin\{7,9,11,13\}$. F\"uredi also proved that, for all sufficiently large integers $q$, every $C_4$-free graph on $q^2+q+1$ vertices with exactly $\frac 12 q(q+1)^2$ edges is an orthogonal polarity graph (unpublished; see \cite{Furedi2007}). The related value $\ex(q^2+q,C_4)$ was also determined by Firke, Kosek, Nash and Williford \cite{Firke2013} for all $q=2^k$. Recently, He, Ma and Yang \cite{HMY2021,HMY2023} obtained stability and further exact results related to the extremal problems on $C_4$, while Ma and Yang \cite{MaYang2023} established further upper bounds for $\ex(n,C_4)$.

For any graph $G$, let $\tau(G)$ denote the number of spanning trees in $G$. We set $\tau(G) = 0$ when $G$ is disconnected; hence all upper bounds stated for connected graphs extend immediately to arbitrary graphs of the same order. Extremal problems for the number of spanning trees have also received considerable attention in optimum design and network theory \cite{Cheng1981,BoeschLiSuffel1991,GilbertMyrvold1997,Kelmans1996,KelmansChelnokov1974}.
These results motivate the study of an analogous Tur\'an-type problem in which the number of edges is replaced by the number of spanning trees. Following London \cite{London2026}, define
\[\st(n,C_4):=\max\{\tau(G):G\in \Forb(n,C_4)\}.\]

For the following terminology, see \cite[Section~2.3]{HMY2023}. Let $\Pi$ be a finite projective plane of order $q$ with point set $\mathcal P$ and line set $\mathcal L$.  A {\it polarity} $\varphi$ of $\Pi$ is a bijection $\varphi:\mathcal P\cup\mathcal L\longrightarrow\mathcal P\cup\mathcal L$ such that $\varphi(\mathcal P)=\mathcal L$, $\varphi(\mathcal L)=\mathcal P$, $\varphi^2$ is the identity function, and for every point $x\in\mathcal P$ and line $\ell\in\mathcal L$,
\[x\in\ell\quad\Longleftrightarrow\quad\varphi(\ell)\in\varphi(x).\]
A point $x\in\mathcal P$ is called {\it absolute} if $x\in\varphi(x)$. The {\it polarity graph} $G(\varphi)$ associated with $\varphi$ is the simple graph with vertex set $\mathcal P$ such that, for two distinct points $x,y\in\mathcal P$,
\[xy\in E(G(\varphi))\quad\Longleftrightarrow\quad x\in\varphi(y).\]
A polarity $\varphi$ is called {\it orthogonal} if it has exactly $q+1$ absolute points, and in this case $G(\varphi)$ is called an {\it orthogonal polarity graph}. Note that any two distinct points lie in a unique line and $|\mathcal L|=|\mathcal P|=q^2+q+1$. It follows that every polarity graph arising from a projective plane of order $q$ has $q^2+q+1$ vertices. Moreover, suppose that \(x_1x_2x_3x_4x_1\) is a \(4\)-cycle in \(G(\varphi)\). Then the distinct points \(x_2\) and \(x_4\) both lie on each of the lines \(\varphi(x_1)\) and \(\varphi(x_3)\). The uniqueness of the line through two distinct points implies that \(\varphi(x_1)=\varphi(x_3)\), contradicting the injectivity of \(\varphi\). Hence \(G(\varphi)\) is \(C_4\)-free.

For a prime power $q$, let $\mathbb F_q$ be the field with $q$ elements, and let $\mathbb F_q^3$ denote the three-dimensional vector space over $\mathbb F_q$. The Desarguesian projective plane $\mathrm{PG}(2,q)$ has the one-dimensional subspaces of $\mathbb F_q^3$ as points and the two-dimensional subspaces as lines, with incidence given by inclusion. The classical {\it Erd\H{o}s--R\'enyi orthogonal polarity graph} $ER_q$ has the points of $\mathrm{PG}(2,q)$ as its vertices. Two distinct points represented by nonzero vectors $x=(x_0,x_1,x_2)$ and $y=(y_0,y_1,y_2)$ are adjacent if and only if $x_0y_0+x_1y_1+x_2y_2=0$. This is an orthogonal polarity graph; see \cite{PengTaitTimmons2015}.

London \cite{London2026} determined the number of spanning trees of every polarity graph arising from a projective plane of order $q$. In particular, he obtained the following sharp result within the class of polarity graphs.

\begin{theorem}[\hspace{1sp}{\cite{London2026}}]
\label{thm:London-polarity}
Let $G$ be a polarity graph arising from a projective plane of order $q$, and let $n=q^2+q+1$. Then
\[\tau(G)\le n^{(n-3)/2},\]
with equality if and only if $G$ is an orthogonal polarity graph. In particular, for every prime power $q$, $\tau(ER_q)=n^{(n-3)/2}$.
\end{theorem}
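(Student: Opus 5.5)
We plan to compute $\tau(G)$ with the Matrix-Tree theorem in its eigenvalue form,
\[
\tau(G)=\frac1n\prod_{i=2}^{n}\mu_i ,
\]
where $0=\mu_1\le\mu_2\le\dots\le\mu_n$ are the Laplacian eigenvalues of $L=D-A$. The key structural input is the matrix identity for polarity graphs. Let $M$ be the $n\times n$ point–line incidence matrix, indexed so that the column of point $y$ is the line $\varphi(y)$. Then $M$ is symmetric, $M_{xy}=1$ iff $x\in\varphi(y)$, and $M=A+\Delta$, where $\Delta$ is the diagonal $0/1$ matrix of absolute points. Since $M$ is the incidence matrix of a projective plane of order $q$,
\[
M^2=MM^{T}=qI+J .
\]
Every row of $M$ has $q+1$ ones, so $d_x=q+1-\Delta_{xx}$. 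Hence $D=(q+1)I-\Delta$, and
\[
L=(q+1)I-\Delta-(M-\Delta)=(q+1)I-M .
\]

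With this identity the spectrum of $L$ is determined. The matrix $M$ is symmetric with $M\mathbf 1=(q+1)\mathbf 1$. On $\mathbf 1^{\perp}$ we have $M^2=qI$, so the remaining eigenvalues of $M$ are $\pm\sqrt q$. Let $a$ and $b$ be their multiplicities, so that $a+b=n-1$. Taking traces,
\[
\operatorname{tr}M=|\{\text{absolute points}\}|=:k=(q+1)+(a-b)\sqrt q .
\]
The Laplacian eigenvalues on $\mathbf 1^\perp$ are therefore $q+1\mp\sqrt q$, with multiplicities $a$ and $b$, and
\[
\tau(G)=\frac1n (q+1-\sqrt q)^{a}(q+1+\sqrt q)^{b}.
\]

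Now compare with $n^{(n-3)/2}$, using that $(q+1-\sqrt q)(q+1+\sqrt q)=q^2+q+1=n$. Write $a-b=s$. Then
\[
\tau(G)=\frac1n\, n^{\min(a,b)}\,(q+1+\operatorname{sgn}(-s)\sqrt q)^{|s|} .
\]
It is classical (Baer) that every polarity has at least $q+1$ absolute points, so $k\ge q+1$, which forces $s\ge0$. Then
\[
\tau(G)=\frac1n\, n^{b}\,(q+1-\sqrt q)^{s}, \qquad b=\frac{n-1-s}{2},
\]
so
\[
\tau(G)=n^{(n-3)/2}\left(\frac{q+1-\sqrt q}{\sqrt n}\right)^{s}.
\]
The bracket is less than $1$, because $(q+1-\sqrt q)^2<(q+1-\sqrt q)(q+1+\sqrt q)=n$. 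Hence $\tau(G)\le n^{(n-3)/2}$, with equality iff $s=0$, that is, iff $k=q+1$, i.e.\ iff the polarity is orthogonal. For $ER_q$, orthogonality (cited above) gives the equality.

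The main obstacle is justifying $s\ge 0$ without invoking Baer's theorem as a black box. The alternative is to argue directly that $k\equiv q+1 \pmod{\sqrt q}$ together with $k\ge q+1$. I would try to derive the lower bound from a counting argument: count non-absolute points $x$ via the line $\varphi(x)$. Each non-absolute line contains an even number of absolute points when $q$ is odd, and the analogous structure holds when $q$ is even. Failing a self-contained argument, I will cite Baer's theorem, which gives exactly $k\ge q+1$ with equality characterizing orthogonal polarities. A secondary point is the case where $q$ is not a square: then $s\sqrt q$ must be an integer, so $s=0$ automatically, and the bound is attained with equality.
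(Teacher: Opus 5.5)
Your proposal is correct and follows essentially the route behind the cited result. Lemma~\ref{lem:London-polarity} comes from the same computation $L=(q+1)I_n-M$ with $M^2=qI_n+J_n$; this $M$ is the paper's $Q(G)$, as in the equality case of Theorem~\ref{thm:uniform-bound}. Moreover, $s=(k-q-1)/\sqrt q=2(m_q-e(G))/\sqrt q$, so your closed form is exactly London's formula. As you note, the inequality then reduces to Baer's bound $k\ge q+1$, which is unavoidable here and legitimate to cite.

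If you want that step self-contained, your involution $y\mapsto\varphi(y)\cap\ell$ on a non-absolute line $\ell$ finishes it. For $q$ even, every line contains an absolute point, so $k(q+1)\ge n$. For $q$ odd, $k\neq0$ by your trace identity. The $q$ lines other than $\varphi(P)$ through an absolute point $P$ are non-absolute, so each contains a second absolute point.
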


For every prime power $q$ and $n=q^2+q+1$, the graph $ER_q$ is $C_4$-free with $\tau(ER_q)=n^{(n-3)/2}$. Thus,  $\st(n,C_4)\ge n^{(n-3)/2}$. London \cite{London2026} conjectured that this lower bound is best possible and that the extremal graphs are precisely the orthogonal polarity graphs.

\begin{conjecture}[\hspace{1sp}{\cite{London2026}}]
\label{conj:London}
Let $q$ be a prime power and let $n=q^2+q+1$. Then
\[\st(n,C_4)=n^{(n-3)/2},\]
and the maximizers are precisely the orthogonal polarity graphs.
\end{conjecture}

Using the Grone-Merris bound \cite{GroneMerris1988}, together with F\"uredi's extremal edge bound and a degree balancing argument, London \cite{London2026} also proved that
\[\ln\st(n,C_4)=\frac{n-3}{2}\ln n+O(\sqrt n)\]
as $q\to\infty$ through prime powers, where $n=q^2+q+1$. Moreover, combining the exact spanning tree counting formula for polarity graphs with the stability theorem of He, Ma and Yang \cite{HMY2023}, London \cite{London2026} proved that, for every fixed $c\in(0,1)$ and all sufficiently large even integers $q$, every $C_4$-free graph $G$ on $n=q^2+q+1$ vertices with
\[e(G)\ge\frac12q(q+1)^2-\frac{c}{2}q\]
satisfies
\[\tau(G)\le n^{(n-3)/2},\]
with equality if and only if $G$ is an orthogonal polarity graph. Thus London's conjecture was proved in this stability regime for all sufficiently large even prime powers $q$.

Our main result removes the restrictions that $q$ is even and sufficiently large. It is a uniform upper bound for the number of spanning trees in connected $C_4$-free graphs of order $q^2+q+1$, valid for every integer $q\ge2$. The bound depends explicitly on the number of edges and is attained precisely by polarity graphs. For an integer $q\ge2$, put
\begin{equation}
	\label{intro-cons}
	n=q^2+q+1,\qquad
	m_q=\frac12q(q+1)^2,\qquad
	A_q=\frac{1}{2\sqrt q}
	\ln\frac{q+1+\sqrt q}{q+1-\sqrt q}.
\end{equation}

\begin{theorem}
	\label{thm:uniform-bound}
	Let $q\ge2$ be an integer, and let $G$ be a connected $C_4$-free graph on $n=q^2+q+1$ vertices. Then
	\begin{equation}\label{eq:intro-uniform-bound}
		\tau(G)\le n^{(n-3)/2}
		\exp\bigl(-2A_q(m_q-e(G))\bigr).
	\end{equation}
	Equality holds if and only if $G$ is a polarity graph arising from a projective plane of order $q$.
\end{theorem}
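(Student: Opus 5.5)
The plan is to apply the Matrix-Tree Theorem and bound the product of the nonzero Laplacian eigenvalues by a polynomial test function. Only a few spectral moments enter, and these can be controlled using the $C_4$-free condition. Write $L=D-A$ for the Laplacian of $G$ and $0=\mu_0<\mu_1\le\dots\le\mu_{n-1}$ for its eigenvalues, so that $\ln\tau(G)=\sum_{i\ge1}\ln\mu_i-\ln n$.

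\emph{Guiding example.} For a polarity graph one has $A^2=J+qI-E$, where $E$ is the diagonal indicator of the absolute points. Hence the adjacency spectrum is $q+1$ together with $\pm\sqrt q$, up to a perturbation coming from $E$. The Laplacian spectrum should therefore concentrate near $q+1\pm\sqrt q$. Indeed $2\sqrt q\,A_q=\ln(q+1+\sqrt q)-\ln(q+1-\sqrt q)$ is a difference quotient of $\ln$ between these two points, which is what makes $A_q$ the natural constant.

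\emph{Step 1: choose a quadratic majorant of $\ln$.} I choose a quadratic $p(x)=\alpha+\beta x+\gamma x^2$ with $\gamma<0$ that is tangent to $\ln x$ at both $x_\pm=q+1\pm\sqrt q$. Since $(\ln-p)''=-1/x^2-2\gamma$ changes sign exactly once, one checks that $\ln x\le p(x)$ holds on the whole relevant range $0<x\le n$, with equality only at $x_\pm$. This gives
\[
\sum_{i\ge1}\ln\mu_i\le (n-1)\alpha+\beta\,\mathrm{tr}L+\gamma\bigl(\mathrm{tr}L^2-0\bigr),
\]
where $\mathrm{tr}L=2e(G)$ and $\mathrm{tr}L^2=\sum_v d_v^2+2e(G)$.

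\emph{Step 2: bound $\sum_v d_v^2$ and handle the top eigenvalue.} Because $\gamma<0$, this step needs a \emph{lower} bound on $\sum_v d_v^2$; Cauchy--Schwarz gives $\sum_v d_v^2\ge 4e(G)^2/n$. The $C_4$-free condition has to enter elsewhere. Each pair of vertices has at most one common neighbour, so $\sum_v\binom{d_v}{2}\le\binom n2$. I will also treat the one large eigenvalue separately, because the analogue of the Perron direction forces one Laplacian eigenvalue outside the interval near $x_\pm$.

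\emph{Step 3: replace $p$ by a refined test function if needed.} If the quadratic alone does not suffice, I would pass to a test function using $\mathrm{tr}(L^2)$ and $\mathrm{tr}(A^2)$ together with the entrywise inequality $A^2\le J+D-I$ that characterizes $C_4$-freeness. Concretely, consider the matrix $M=(L-(q+1)I)^2-qI$. Its quadratic form on $\mathbf 1^\perp$ is controlled by the $C_4$ condition and by the deviations $d_v-(q+1)$. Then bound $\sum_i\phi(\mu_i)$ for a $\phi$ satisfying $\ln x\le\phi(x)$ with tangency at $x_\pm$.

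\emph{Step 4: linearize in the edge deficit.} Optimizing the resulting expression as a function of $e(G)$, the bound at $e(G)=m_q$ must equal $\tfrac{n-3}{2}\ln n$ (this is consistent with Theorem~\ref{thm:London-polarity}). Decreasing $e(G)$ should cost at least $2A_q$ per edge, which is the slope of the chord of $\ln$ between $x_-$ and $x_+$ divided appropriately.

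\emph{Step 5: equality.} Equality forces every nontrivial eigenvalue into $\{x_-,x_+\}$ up to the controlled absolute-point perturbation, and forces $A^2=J+D-I$ off the diagonal. That is, every pair of vertices has exactly one common neighbour. By the standard friendship-type argument via incidence, this yields a projective plane of order $q$ with a polarity, so $G$ is a polarity graph.

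The main obstacle is Steps 2--4. The test function must be tight simultaneously for all polarity graphs, whose degree sequences differ according to the number $a$ of absolute points. Their deficit is $m_q-e(G)=(a-q-1)/2$, and the per-edge loss must come out exactly as $2A_q$. Getting the exact linear-in-deficit form, rather than a bound quadratic in $e(G)$, is where I expect the real work to lie. I would first try to absorb the non-regularity into the $E$-perturbation term, using that absolute points have degree $q$ and the rest degree $q+1$.
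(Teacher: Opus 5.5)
\textbf{There is a genuine gap, and it starts at Step~1: the quadratic majorant you describe does not exist.}

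Matching derivatives at $x_\pm=q+1\pm\sqrt q$ forces $\gamma=-1/(2x_+x_-)=-1/(2n)$ and $\beta=2(q+1)/n$. Matching values then forces $\ln x_+-\ln x_-=2\sqrt q\,(q+1)/n$, i.e.\ $A_q=(q+1)/n$. But $A_q<(q+1)/n$ strictly, because the trapezoid rule overestimates $\int_{x_-}^{x_+}dx/x$; this is exactly why $B_q>0$ in the paper. Your own remark that $(\ln-p)''$ is monotone already rules the quadratic out: two double zeros of $\ln-p$ would give, by Rolle, three zeros of $(\ln-p)'$ and two of $(\ln-p)''$.

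Every polarity graph has nonzero Laplacian spectrum exactly $\{x_-,x_+\}$, with both multiplicities positive. So a majorant tight for all of them must be tangent to $\ln$ at both points. It therefore has degree at least three, and you need a third spectral moment.

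The third moment is also where $C_4$-freeness enters. In your plan, $\operatorname{tr}L$ and $\operatorname{tr}L^2$ depend only on the degree sequence. The inequality $\sum_v\binom{d_v}{2}\le\binom n2$ bounds $\sum_v d_v^2$ in the direction you yourself note is useless.

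The paper's route is as follows.
\begin{itemize}
\item It works with $Q(G)=(q+1)I_n-L(G)=A(G)+\operatorname{diag}(r_1,\ldots,r_n)$.
\item It takes $P$ to be the cubic Hermite interpolant of $\ln(q+1-x)$ at the double nodes $\pm\sqrt q$. This majorizes $\ln(q+1-x)$ on $(-\infty,q+1)$ because the fourth derivative is negative.
\item The relevant quantity is not the quadratic form of $Q(G)^2-qI_n$ but $\operatorname{tr}\bigl(Q(G)(Q(G)^2-qI_n)\bigr)$, which contains $\operatorname{tr}A(G)^3=6t(G)$.
\item Since $c_{ij}\le1$ on nonadjacent pairs while $\sum_{i<j}c_{ij}=\sum_i\binom{d_i}{2}$ is fixed, the edges must carry many common neighbours. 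This gives $6t(G)\ge\sum_i d_i^2-n(n-1)$, hence $\sum_{k\ge2}(\mu_k^3-q\mu_k)\ge R_3$. That inequality pairs with the negative cubic coefficient $-B_q$ in the right direction.
\item Writing $P$ in the basis $1,\,x,\,x^2-q,\,x^3-qx$, the first moment $\sum_{k\ge2}\mu_k=2h$ produces exactly the $-2A_qh$ term you were after in Step~4.
\end{itemize}

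\textbf{Two further ingredients are missing.}

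First, after the moment step each vertex leaves the term $r_i(r_i-1)\bigl(B_q(1-r_i)-\frac{1}{2n}\bigr)$. This is positive for vertices of large degree ($r_i$ very negative), so the slack in the interpolation must pay for it. The paper notes that $r_i=\sum_kU_{ik}^2\mu_k$ is a convex combination of eigenvalues of $Q(G)$. It applies Jensen to the convex nonincreasing tail $\phi$ of $g=P-\ln(q+1-x)$ to get $\sum_{k\ge2}g(\mu_k)\ge\sum_i\phi(r_i)$. It then proves a scalar inequality for every integer $r\ge1-q^2$.

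Removing ``one large eigenvalue'' does not address this. The Perron-type vector $\mathrm{j}_n$ corresponds to the Laplacian eigenvalue $0$, which is already excluded. The outliers instead come from possibly many high-degree vertices.

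Second, Step~5 is wrong as stated. In a polarity graph, an absolute point $x$ and any neighbour $y$ have no common neighbour, because $\varphi(x)\cap\varphi(y)=\{x\}$. Moreover, by the friendship theorem, ``every pair has exactly one common neighbour'' characterizes windmills, not polarity graphs. The correct rigid identity is $Q(G)^2=qI_n+J_n$ with all $r_i\in\{0,1\}$. In other words, $A(G)$ plus the diagonal indicator of the absolute points is a symmetric incidence matrix of a projective plane of order $q$.
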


Since $A_q>0$, Theorem~\ref{thm:uniform-bound} immediately gives the following consequence when $e(G)\le m_q$. The equality characterization follows from the fact that a polarity graph has $m_q$ edges if and only if its polarity has exactly $q+1$ absolute points; see Lemma~\ref{pola-de}.

\begin{corollary}\label{cor:edge-bound}
	Let $q\ge2$ be an integer, and put $n=q^2+q+1$. If $G$ is a connected $C_4$-free graph on $n$ vertices satisfying
	\[
	e(G)\le\frac12q(q+1)^2,
	\]
	then
	$
	\tau(G)\le n^{(n-3)/2}.
	$
	Equality holds if and only if $G$ is an orthogonal polarity graph.
\end{corollary}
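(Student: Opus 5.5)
The plan is to use the matrix-tree theorem in spectral form,
\[
\tau(G)=\frac1n\prod_{i=2}^{n}\mu_i ,
\]
where $0=\mu_1<\mu_2\le\dots\le\mu_n$ are the Laplacian eigenvalues of the connected graph $G$. The whole argument then reduces to bounding $\sum_{i\ge2}\ln\mu_i$ using only the power sums
\[
\sum_{i\ge 2}\mu_i=\operatorname{tr}L=2e(G),\qquad
\sum_{i\ge2}\mu_i^2=\operatorname{tr}L^2=\sum_v d_v^2+2e(G).
\]
Put $r_\pm=q+1\pm\sqrt q$. These numbers satisfy $r_+r_-=n$, and they are the nontrivial Laplacian eigenvalues of an orthogonal polarity graph, up to the correction coming from absolute points. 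Indeed $n^{(n-3)/2}=\frac1n\,(r_+r_-)^{(n-1)/2}$, which is the target bound exactly when the $n-1$ nonzero eigenvalues split evenly between $r_+$ and $r_-$. Moreover $A_q=\frac{\ln r_+-\ln r_-}{r_+-r_-}$ is the slope of the secant of $\ln$ through $r_\pm$. This strongly suggests the factor $\exp(-2A_q(m_q-e))$ arises as $A_q$ times the trace deficit $\operatorname{tr}L-2m_q=2(e-m_q)$.

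\textbf{Key inequality.} I would look for a pointwise bound
\[
\ln x\le \alpha+\beta x-\gamma x^2,\qquad \gamma>0,
\]
valid on the range $0<x\le 2\Delta(G)$ containing all nonzero Laplacian eigenvalues. I would choose it to be tangent to $\ln$ at both $r_-$ and $r_+$. This is possible for a concave quadratic: the difference $p-\ln$ has second derivative $-2\gamma+1/x^2$, which changes sign, so a double tangency can occur, whereas a convex quadratic cannot touch $\ln$ twice. I would then check that $p\ge\ln$ up to the crude bound $\mu\le 2\Delta\le 2(q+1)+O(1)$ available for $C_4$-free graphs of this order. Summing over $i\ge2$ gives
\[
\sum_{i\ge2}\ln\mu_i\le (n-1)\alpha+2\beta e-\gamma\Bigl(\textstyle\sum_v d_v^2+2e\Bigr).
\]
Since this needs a \emph{lower} bound on $\sum d_v^2$, I would use Cauchy--Schwarz, $\sum d_v^2\ge 4e^2/n$. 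This yields a concave function $\Phi(e)$ with $\ln(n\tau)\le\Phi(e)$. Evaluating at $e=m_q$ should reproduce $\frac{n-1}{2}\ln n$, because then the degrees are essentially $q+1$ and the eigenvalues are balanced between $r_\pm$. Finally I would bound $\Phi$ by its tangent line at $m_q$ for $e\le m_q$, and by a secant or tangent argument for $e>m_q$ (admissible, since $C_4$-freeness caps $e$ near $m_q$ anyway). I would hope the slope there is exactly $2A_q$. If the parabola alone does not produce the clean slope $2A_q$, the fallback is to split $\ln x=\ell(x)+(\ln x-\ell(x))$, with $\ell$ the secant through $r_\pm$, and control the remainder using $\operatorname{tr}(L-r_-I)(L-r_+I)$. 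This quantity equals $\sum d_v^2+2e-2(q+1)\cdot 2e+n(n-1)$ up to bookkeeping. Its sign is controlled by the $C_4$-free count $\sum_v\binom{d_v}{2}\le\binom n2$, so here the C$_4$ condition enters in the favourable direction. I expect that this second route, and not Cauchy--Schwarz, is the one that gives exactly the stated exponent.

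\textbf{Main obstacle.} The hardest step is choosing the auxiliary function so that three things hold at once: it dominates $\ln$ on the whole spectral range, only second-moment information is needed, and the resulting dependence on $e$ is exactly linear with slope $2A_q$ with no lossy constants. This must hold uniformly for all $q\ge2$, including small $q$ where $2\Delta$ is not far from $r_+$. I would first try the secant-plus-$(x-r_-)(x-r_+)$ decomposition with the $C_4$ counting bound, since it makes equality transparent.

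\textbf{Equality.} Equality forces every nonzero Laplacian eigenvalue into $\{r_-,r_+\}$, equality in $\sum_v\binom{d_v}{2}\le\binom n2$, and degrees in $\{q,q+1\}$. So every pair of vertices has exactly one common neighbour. Adding a loop at each vertex of degree $q$ gives a symmetric $0/1$ matrix $A'$ with $A'^2=J+qI$, the incidence matrix of a projective plane of order $q$ with a polarity. Hence $G$ is a polarity graph. Conversely, London's count in Theorem~\ref{thm:London-polarity}, combined with the edge count of polarity graphs, gives equality.
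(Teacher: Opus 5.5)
There is a genuine gap: your key inequality cannot exist, and with it the whole second-moment strategy fails. Suppose $p(x)=\alpha+\beta x-\gamma x^2$ were tangent to $\ln x$ at both $r_-$ and $r_+$. The two derivative conditions force $\gamma=\frac{1}{2r_+r_-}=\frac{1}{2n}$ and $\beta=\frac{r_++r_-}{n}=\frac{2(q+1)}{n}$, so $p(r_+)-p(r_-)=(r_+-r_-)\frac{q+1}{n}$. Matching values would then require $A_q=\frac{q+1}{n}$, but $A_q<\frac{q+1}{n}$ strictly; this is exactly $B_q>0$ in Lemma~\ref{se4l1-e1}. You have four conditions and three parameters.

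Your secant fallback fails for the same reason. Requiring $\ln x-\ell(x)\le -c(x-r_-)(x-r_+)$ near $r_+$ and near $r_-$ forces $c=(A_q-1/r_+)/(2\sqrt q)$ and $c=(1/r_--A_q)/(2\sqrt q)$ respectively, which agree only if $A_q=\frac{q+1}{n}$. Moreover, with $c>0$ it needs a \emph{lower} bound on $\sum_v d_v^2$, whereas $\sum_v\binom{d_v}{2}\le\binom{n}{2}$ is an \emph{upper} bound. So $C_4$-freeness enters in the wrong direction; in your main route it is not used at all, apart from a degree bound that is false (see below).

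This is not a bookkeeping issue. An orthogonal polarity graph attains $n^{(n-3)/2}$, and its nonzero Laplacian eigenvalues are exactly $r_\pm$ (with no correction), each of multiplicity $\frac{n-1}{2}$. Any majorant in a sharp chain must therefore agree with $\ln$ to first order at both interior points $r_\pm$. In fact no argument using only $\operatorname{tr}L$ and $\operatorname{tr}L^2$ can succeed. Replace two eigenvalues $r_+$ by $r_+\pm\varepsilon$, and move an amount $\varepsilon^2/(2\sqrt q)+O(\varepsilon^4)$ from another $r_+$ to an $r_-$. This keeps both traces fixed and changes $\sum_i\ln\lambda_i$ by $\varepsilon^2\bigl(\frac1n-\frac1{r_+^2}\bigr)+O(\varepsilon^4)>0$. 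Cauchy--Schwarz is an additional loss, strict already at orthogonal polarity graphs, since these are not regular.

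The missing idea is a \emph{third}-moment inequality, which is where $C_4$-freeness really enters. The paper works with $Q(G)=(q+1)I_n-L(G)$ and proceeds in three moves:
\begin{itemize}
\item it uses the triangle bound \eqref{eq:triangle-lower} to get $\sum_{k\ge2}(\mu_k^3-q\mu_k)\ge\sum_i r_i(r_i-1)^2$;
\item it keeps the second moment exact as $\sum_{k\ge2}(\mu_k^2-q)=\sum_i r_i(r_i-1)$, which is termwise nonnegative by integrality (exactly what Cauchy--Schwarz discards);
\item it majorizes $\ln(q+1-x)$ by its cubic Hermite interpolant with double nodes at $\pm\sqrt q$. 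The quadratic coefficient of this cubic is your $\frac{1}{2n}$, and its cubic coefficient $B_q>0$ is precisely the discrepancy that defeats the parabola.
\end{itemize}

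Second, the spectral window you rely on does not exist: $C_4$-free graphs of this order need not have $\Delta\le q+1+O(1)$. Since $n$ is odd, the friendship graph on $n$ vertices is connected and $C_4$-free, with $\frac32(n-1)\le m_q$ edges and a vertex of degree $n-1$. Its largest Laplacian eigenvalue is therefore $n$. A majorant must hold on all of $(0,n]$, not just up to $2(q+1)+O(1)$, and a concave parabola, which tends to $-\infty$, is ill suited to this. The paper's cubic majorant holds for every $x<q+1$, i.e.\ for every positive Laplacian eigenvalue. The price is that $\sum_i r_i(r_i-1)^2$ can be negative when some $r_i<0$. The paper repairs this with the convex non-increasing function $\phi$ of Lemma~\ref{lem:uniform-tail}, Jensen's inequality applied to $r_i=\sum_k U_{ik}^2\mu_k$, and the scalar inequality of Lemma~\ref{lem:uniform-scalar} for integers $r\ge 1-q^2$. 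Your plan has no counterpart to this step.

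Finally, your equality analysis is inconsistent with the actual extremal graphs. In an orthogonal polarity graph the $q(q+1)$ edges at absolute points lie in no triangle, so $\sum_v\binom{d_v}{2}=\binom{n}{2}-q(q+1)<\binom{n}{2}$. A chain requiring equality in $\sum_v\binom{d_v}{2}\le\binom{n}{2}$ would therefore exclude exactly the graphs that attain the bound. The correct route is that equality forces $r_i\in\{0,1\}$ and $\mu_k\in\{\pm\sqrt q\}$ for $k\ge2$. Hence $Q(G)^2=qI_n+J_n$ with $Q(G)$ a symmetric $0$--$1$ matrix, and Lemma~\ref{lem:matrix-polarity} applies. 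The corollary then follows from $h\ge0$, $A_q>0$, and the fact that $\sum_i r_i=q+1+2h$ counts absolute points.
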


Corollary~\ref{cor:edge-bound} applies independently of the exact value of $\ex(q^2+q+1,C_4)$. Combining it with F\"uredi's edge bound in Theorem~\ref{th-1} yields the following result.

\begin{corollary}
\label{cor:nonexceptional-orders}
	Let $q\ge2$ be an integer with $q\notin\{7,9,11,13\}$, and let $n=q^2+q+1$. Then every connected $C_4$-free graph $G$ on $n$ vertices satisfies
	\[
	\tau(G)\le n^{(n-3)/2}.
	\]
	Equality holds if and only if $G$ is an orthogonal polarity graph.
\end{corollary}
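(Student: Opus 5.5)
The proof will be a direct combination of F\"uredi's edge bound with Corollary~\ref{cor:edge-bound}; no new spectral or counting argument should be needed. Fix an integer $q\ge2$ with $q\notin\{7,9,11,13\}$, put $n=q^2+q+1$, and let $G$ be a connected $C_4$-free graph on $n$ vertices. Since $q\ge2$, we also have $q\ne1$, so $q\notin\{1,7,9,11,13\}$. Hence Theorem~\ref{th-1} applies, and because $G\in\Forb(n,C_4)$ it gives
\[
e(G)\le \ex(n,C_4)\le \tfrac12 q(q+1)^2=m_q .
\]

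With this edge bound in hand, $G$ satisfies every hypothesis of Corollary~\ref{cor:edge-bound}: it is connected, $C_4$-free, has order $q^2+q+1$ with $q\ge2$, and has at most $\frac12q(q+1)^2$ edges. That corollary then yields $\tau(G)\le n^{(n-3)/2}$, together with its equality characterization: equality holds if and only if $G$ is an orthogonal polarity graph. (Underneath, this rests on Theorem~\ref{thm:uniform-bound}: the factor $\exp(-2A_q(m_q-e(G)))$ is at most $1$ precisely because $e(G)\le m_q$ and $A_q>0$. Equality forces both $e(G)=m_q$ and $G$ to be a polarity graph, and Lemma~\ref{pola-de} identifies these as the orthogonal ones.)

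Two small points need checking. First, the corollary is stated for every such integer $q$, not only prime powers. When no projective plane of order $q$ exists, there are no orthogonal polarity graphs, so the equality clause simply says the inequality is strict; this is consistent, and nothing extra is needed. Second, disconnected graphs are excluded by hypothesis, and in any case they have $\tau(G)=0$.

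I do not expect any step here to be a genuine obstacle. All the real work is in Theorem~\ref{thm:uniform-bound}, which I am allowed to assume. The only thing to verify is the bookkeeping of exceptional values: $q=1$ is excluded by $q\ge2$, and $q\in\{7,9,11,13\}$ is excluded by hypothesis.
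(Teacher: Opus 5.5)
Your proposal is correct and matches the paper's argument exactly: the paper also deduces this corollary immediately by using F\"uredi's bound in Theorem~\ref{th-1} to get $e(G)\le m_q$, and then applying Corollary~\ref{cor:edge-bound} for both the inequality and the equality characterization. Your handling of the exceptional value $q=1$ (excluded by $q\ge2$) and your remark on orders with no projective plane are consistent with the paper.
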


For a general integer $q$, the equality characterization identifies the only possible graphs attaining the displayed upper bound; it does not assert that the bound is attained. When $q$ is a prime power, the Erd\H{o}s--R\'enyi orthogonal polarity graph $ER_q$ exists and attains the bound. Moreover, disconnected graphs have no spanning trees. Hence Corollary~\ref{cor:nonexceptional-orders} proves Conjecture~\ref{conj:London} for every prime power $q\notin\{7,9,11,13\}$.

\begin{corollary}\label{cor:london-conjecture}
Let $q$ be a prime power with $q\notin\{7,9,11,13\}$, and let $n=q^2+q+1$. Then
\[
\st(n,C_4)=n^{(n-3)/2},
\]
and the maximizers are precisely the orthogonal polarity graphs.
\end{corollary}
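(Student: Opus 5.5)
The plan is to derive this directly from Corollary~\ref{cor:nonexceptional-orders} and Theorem~\ref{thm:London-polarity}, by establishing the lower bound, the upper bound and the characterization of maximizers in turn.

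\emph{Lower bound.} Since $q$ is a prime power, the Desarguesian plane $\mathrm{PG}(2,q)$ exists, and so does the Erd\H{o}s--R\'enyi orthogonal polarity graph $ER_q$ on $n=q^2+q+1$ vertices. As observed in the introduction, every polarity graph is $C_4$-free, so $ER_q\in\Forb(n,C_4)$. By Theorem~\ref{thm:London-polarity}, $\tau(ER_q)=n^{(n-3)/2}$, and hence $\st(n,C_4)\ge n^{(n-3)/2}$.

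\emph{Upper bound.} Let $G\in\Forb(n,C_4)$ be arbitrary. If $G$ is disconnected, then $\tau(G)=0<n^{(n-3)/2}$. Otherwise $G$ is connected. Every prime power satisfies $q\ge2$, and by hypothesis $q\notin\{7,9,11,13\}$, so Corollary~\ref{cor:nonexceptional-orders} applies and gives $\tau(G)\le n^{(n-3)/2}$. Combining the two cases yields $\st(n,C_4)=n^{(n-3)/2}$.

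\emph{Characterization of maximizers.} Suppose $G\in\Forb(n,C_4)$ satisfies $\tau(G)=n^{(n-3)/2}$. This value is positive, so $G$ is connected. The equality clause of Corollary~\ref{cor:nonexceptional-orders} then forces $G$ to be an orthogonal polarity graph. Conversely, let $G$ be any orthogonal polarity graph arising from a projective plane of order $q$. It is $C_4$-free, and its order is $n$. By the equality case of Theorem~\ref{thm:London-polarity}, $\tau(G)=n^{(n-3)/2}$, so $G$ is a maximizer.

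Every step is an immediate application of the results already stated, so I do not expect a genuine obstacle. The only point needing care is the two-sidedness of the characterization. Corollary~\ref{cor:nonexceptional-orders} alone shows that every maximizer is an orthogonal polarity graph. The converse, that every orthogonal polarity graph (not just $ER_q$) attains the bound, must be taken from Theorem~\ref{thm:London-polarity}.
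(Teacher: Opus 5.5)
Your proposal is correct and matches the paper's derivation: Corollary~\ref{cor:nonexceptional-orders}, together with $\tau=0$ for disconnected graphs, gives the upper bound and the forward direction of the characterization, while the existence of $ER_q$ and London's formula (Theorem~\ref{thm:London-polarity}) give attainment and show that every orthogonal polarity graph is a maximizer. Taking the converse directly from Theorem~\ref{thm:London-polarity} is a clean choice, since it avoids separately checking that orthogonal polarity graphs are connected before applying the ``if'' direction of Corollary~\ref{cor:nonexceptional-orders}.
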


\section{Preliminaries}
We first record a consequence of the $C_4$-free condition. Let $G$ be a $C_4$-free graph of order $n$ and $V(G)=\{1,2,\ldots,n\}$. Define  $c_{ij}=|N(i)\cap N(j)|$ for distinct vertices $i,j\in V(G)$.  Since $G$ is $C_4$-free, we have $c_{ij}\le1$. Let $t(G)$ denote  the number  of triangles in   $G$.  For $ij\in E(G)$, we have $c_{ij}=1$ precisely when $ij$ lies in a triangle. Hence
\[
\sum_{ij\in E(G)}c_{ij}=3t(G),
\]
since each triangle contributes once for each of its three edges. By counting $2$-paths according to their middle vertex and their  two endpoints, respectively,
\[
\sum_{i\in V(G)}\binom{d_i}{2}=\sum_{\{i,j\}\subseteq V(G)}c_{ij}=\sum_{ij\notin E(G)} c_{ij}+\sum_{ij\in E(G)}c_{ij}\le\binom{n}{2}-e(G)+3t(G).
\]
Equivalently,
\begin{equation}\label{eq:triangle-lower}
6t(G)\ge\sum_{i\in V(G)}d_i^2-n(n-1).
\end{equation}

We next recall some standard matrix notation. The {\it adjacency matrix} of $G$ is defined as $A(G)=(a_{ij})_{n\times n}$, where
\[a_{ij}=
\begin{cases}
1, &\mbox {\rm if $ij\in E(G)$},\\
0, &\mbox {\rm otherwise}.
\end{cases}
\]
Let $D(G)=\operatorname{diag}(d_{1},d_{2},\ldots,d_{n})$ be the {\it degree diagonal matrix} of $G$. The {\it Laplacian matrix} of $G$ is defined as $L(G)=D(G)-A(G)$. Let $I_n$, $J_n$ and $\mathrm{j}_n$ be the {\it identity matrix} of order $n$, {\it all-ones matrix} of order $n$  and {\it all-ones column vector} of dimension $n$, respectively.

For a square matrix $M$, let $\tr(M)$ be its {\it trace}, and let $M_{j}^{i}$ denote the submatrix of  $M$ obtained by deleting the $i$-th row and $j$-th column in $M$.  We shall use the following classical matrix-tree theorem.

\begin{theorem}[Matrix-Tree Theorem, {\cite{Biggs1974,BondyMurty1976}}]\label{thm:matrix-tree}
Let $G$ be a connected graph on $n$ vertices. For any $i,j\in\{1,2,\ldots,n\}$, we have
\[\tau(G)=(-1)^{i+j}\det \left(L(G)_{j}^{i}\right).\]
Moreover, if the eigenvalues of $L(G)$ are $\lambda_1, \lambda_2,\ldots,\lambda_{n-1}$ and $\lambda_n=0$, then
\[\tau(G)=\frac{1}{n}\prod_{i=1}^{n-1}\lambda_i.\]
\end{theorem}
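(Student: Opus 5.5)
The statement just recorded is the classical Matrix-Tree Theorem. I would prove it through the factorization of $L(G)$ via an oriented incidence matrix, the Cauchy--Binet formula, and an adjugate argument for the off-diagonal cofactors. Orient each edge of $G$ arbitrarily and let $N$ be the $n\times m$ oriented incidence matrix: the column of an edge $e=(u\to v)$ has $+1$ in row $u$, $-1$ in row $v$, and zeros elsewhere. A direct entrywise check gives $L(G)=NN^{T}$. Let $N_i$ denote $N$ with row $i$ deleted. Then $L(G)_i^i=N_iN_i^T$, and Cauchy--Binet gives
\[
\det L(G)_i^i=\sum_{S}\bigl(\det N_i[S]\bigr)^2 ,
\]
where $S$ runs over the $(n-1)$-subsets of edges and $N_i[S]$ is the corresponding square submatrix.

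The central step is the claim that $\det N_i[S]=\pm1$ if $S$ is the edge set of a spanning tree, and $0$ otherwise. Suppose $S$ is not a spanning tree. Since $|S|=n-1$, the spanning subgraph $(V,S)$ is disconnected. Take a component $C$ not containing $i$. Every column of $N_i[S]$ has either two opposite nonzero entries inside the rows of $C$ or none there. Hence the rows indexed by $C$ sum to zero, and the determinant vanishes. If $S$ is a spanning tree, I argue by induction on $n$. The tree has at least two leaves, so it has a leaf $v\neq i$. The row of $v$ in $N_i[S]$ has a single nonzero entry $\pm1$. Expanding along that row reduces to the same situation for the tree $S$ minus $v$, with $i$ still deleted. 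This gives $\det L(G)_i^i=\tau(G)$. I expect this determinant claim to be the only step needing real care; everything else is linear algebra.

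For the off-diagonal cofactors, put $L=L(G)$. We have $L\mathrm{j}_n=0$ and $\mathrm{j}_n^TL=0$. Connectedness together with the previous paragraph gives $\det L_i^i=\tau(G)\neq0$, so $\operatorname{rank}L=n-1$ and the kernel of $L$ is spanned by $\mathrm{j}_n$. From $L\,\mathrm{adj}(L)=\mathrm{adj}(L)\,L=(\det L)I_n=0$:
\begin{itemize}
\item every column of $\mathrm{adj}(L)$ lies in $\ker L$, so it is a multiple of $\mathrm{j}_n$;
\item the same holds for the rows, by symmetry of $L$.
\end{itemize}
Hence $\mathrm{adj}(L)=cJ_n$. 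The diagonal entries of $\mathrm{adj}(L)$ are the principal minors, so $c=\tau(G)$. Since $(-1)^{i+j}\det L_j^i$ is the $(j,i)$ entry of $\mathrm{adj}(L)$, it equals $\tau(G)$ for all $i,j$.

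Finally, write $\det(xI_n-L)=\prod_{k=1}^n(x-\lambda_k)$ with $\lambda_n=0$. The coefficient of $x$ is $(-1)^{n-1}\prod_{k=1}^{n-1}\lambda_k$. The same coefficient also equals $(-1)^{n-1}$ times the sum of the $n$ principal minors of order $n-1$ of $L$, which is $(-1)^{n-1}n\,\tau(G)$. Comparing the two expressions gives $\tau(G)=\frac1n\prod_{k=1}^{n-1}\lambda_k$.
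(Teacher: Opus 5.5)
Your proof is correct and complete. The paper does not prove this statement itself; it quotes it as the classical Matrix-Tree Theorem from Biggs and Bondy--Murty. Your route is essentially the standard textbook argument, as in Biggs: $L=NN^{T}$, Cauchy--Binet with $\pm1$ minors exactly for spanning trees, the rank-$(n-1)$ argument forcing $\mathrm{adj}(L)=\tau(G)J_n$, and reading off the coefficient of $x$ in $\det(xI_n-L)$.
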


The following standard fact about adjacency matrices will be used repeatedly; see \cite[Proposition~1.3.1]{BrouwerHaemers2012}
for example.

\begin{lemma}[\hspace{1sp}{\cite{BrouwerHaemers2012}}]\label{lem:adjacency-walks}
Let $G$ be a  graph and $k$ be any positive integer. Then the $(i,j)$-entry of $A(G)^k$ is the number of walks of length $k$ from $i$ to $j$. In particular, $(A(G)^2)_{ii}=d_i$, and
\[\tr(A(G)^2)=2e(G),\qquad\tr(A(G)^3)=6t(G).\]
\end{lemma}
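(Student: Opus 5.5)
The plan is to prove the walk-counting statement by induction on $k$, and then read off the three special cases by evaluating the diagonal entries of $A(G)^2$ and $A(G)^3$.

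\textbf{Walk count.} For $k=1$, $(A(G))_{ij}=a_{ij}$. This equals the number of walks of length $1$ from $i$ to $j$, namely $1$ if $ij\in E(G)$ and $0$ otherwise. For the inductive step, write
\[
(A(G)^k)_{ij}=\sum_{l=1}^n (A(G)^{k-1})_{il}\,a_{lj}.
\]
Every walk of length $k$ from $i$ to $j$ splits uniquely into a walk of length $k-1$ from $i$ to some vertex $l$, followed by the edge $lj$. By the induction hypothesis, the number of such walks with a fixed penultimate vertex $l$ is $(A(G)^{k-1})_{il}a_{lj}$. Summing over $l$ gives the claim.

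\textbf{The case $k=2$.} Here $(A(G)^2)_{ii}=\sum_l a_{il}a_{li}=\sum_l a_{il}^2=\sum_l a_{il}=d_i$, since $A(G)$ is symmetric with $0/1$ entries. Summing over $i$ and applying the handshake identity $\sum_i d_i=2e(G)$ gives $\tr(A(G)^2)=2e(G)$.

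\textbf{The case $k=3$.} By the first part, $\tr(A(G)^3)$ is the number of closed walks $i\to j\to l\to i$ of length $3$. Because $G$ is simple, it has no loops. Hence $i\ne j$, $j\ne l$ and $l\ne i$, so the three vertices are distinct and pairwise adjacent, i.e.\ they span a triangle. Conversely, each triangle $\{x,y,z\}$ yields exactly $6$ such closed walks: $3$ choices of starting vertex times $2$ orientations. Therefore $\tr(A(G)^3)=6t(G)$.

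\textbf{Main obstacle.} Nothing here is genuinely difficult. The only point needing care is this last bijection: one must use the absence of loops (simplicity of $G$) to rule out degenerate closed $3$-walks, and then count the $6$ walks per triangle exactly once each.
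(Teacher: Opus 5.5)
Your proof is correct and is the standard argument: induction on $k$ via $(A(G)^k)_{ij}=\sum_{l}(A(G)^{k-1})_{il}a_{lj}$, then reading off the diagonal entries for $k=2,3$, using that $G$ is simple as the paper assumes throughout. The paper gives no proof of its own and simply cites Brouwer--Haemers, Proposition~1.3.1, so there is nothing further to compare.
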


Fix an integer $q\ge2$ and put
\[n=q^2+q+1\qquad\text{and}\qquad m_q=\frac12q(q+1)^2.\]
Let $G$ be a connected $C_4$-free graph of order $n$ and write $e(G)=m_q-h$, where $h$ is allowed to be negative.
For each $i\in V(G)$, let $r_i=q+1-d_i$. Next we define the symmetric matrix
\[Q(G):=(q+1)I_n-L(G)=A(G)+\operatorname{diag}(r_{1},r_{2},\ldots,r_{n}).\]
Since $L(G)\mathrm{j}_n=0$, the vector $\mathrm{j}_n$ is an eigenvector of $Q(G)$ with eigenvalue $q+1$. Denote the eigenvalues of $Q(G)$ by $\mu_1,\mu_2,\ldots,\mu_n$, where $\mu_1=q+1$. Since $G$ is connected, $0$ is an eigenvalue of $L(G)$ with multiplicity one, and all the remaining eigenvalues of $L(G)$ are positive. Hence $\mu_i<q+1$ for $2\le i\le n$. By Theorem~\ref{thm:matrix-tree},
\begin{equation}\label{eq:Q-mtt}
n\tau(G)=\prod_{i=2}^n(q+1-\mu_i).
\end{equation}

The $C_4$-free condition provides the following useful relations for the first three spectral moments.   Unless otherwise stated, all subsequent results in this section use the notation introduced above and assume that $G$ is a connected $C_4$-free graph of order $n=q^2+q+1$.

\begin{lemma}\label{lem:moments}
We keep the notation and assumptions introduced above. Define
\[
R_1=\sum_{i=1}^n r_i,\qquad R_2=\sum_{i=1}^n r_i(r_i-1),\qquad R_3=\sum_{i=1}^n r_i(r_i-1)^2.
\]
Then $R_1=q+1+2h$ and
\begin{align}\label{eq:mom1}
	\sum_{i=2}^n\mu_i=2h,
	\qquad
	\sum_{i=2}^n(\mu_i^2-q)
	=R_2,
	\qquad
	\sum_{i=2}^n(\mu_i^3-q\mu_i)
	\ge R_3.
\end{align}
\end{lemma}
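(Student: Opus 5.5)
The plan is to compute the first three power sums of the eigenvalues of $Q=Q(G)=A+R$, where $A=A(G)$ and $R=\operatorname{diag}(r_1,\ldots,r_n)$. We use traces, Lemma~\ref{lem:adjacency-walks}, and the triangle inequality \eqref{eq:triangle-lower}. Each time we subtract the contribution of $\mu_1=q+1$.

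First, $R_1=\sum_i(q+1-d_i)=n(q+1)-2e(G)$. Since $n(q+1)=(q^2+q+1)(q+1)$ and $2m_q=q(q+1)^2$, we get $n(q+1)-2m_q=q+1$. Hence $R_1=q+1+2h$. Next, $\tr Q=\tr A+R_1=R_1$, so
\[
\sum_{i\ge2}\mu_i=R_1-(q+1)=2h.
\]

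For the second moment, $\tr Q^2=\tr A^2+2\tr(AR)+\tr R^2=2e(G)+\sum_i r_i^2$, because $A$ has zero diagonal. We then compute
\[
\sum_{i\ge2}(\mu_i^2-q)=2e(G)+\sum_i r_i^2-(q+1)^2-(n-1)q.
\]
Substituting $2e(G)=n(q+1)-R_1$ and $(n-1)q=q^3+q^2$ gives $n(q+1)-(q+1)^2-q^3-q^2=0$. So the right side equals $\sum_i r_i^2-R_1=R_2$.

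For the third moment, expand $\tr(A+R)^3=\tr A^3+3\tr(A^2R)+3\tr(AR^2)+\tr R^3$. The term $\tr(AR^2)$ vanishes since $A$ has zero diagonal. Also $\tr(A^2R)=\sum_i d_ir_i$ and $\tr A^3=6t(G)$. Applying \eqref{eq:triangle-lower} gives
\[
\tr Q^3\ge \sum_i d_i^2-n(n-1)+3\sum_i d_ir_i+\sum_i r_i^3 .
\]
Now substitute $d_i=q+1-r_i$ and express everything in the $r_i$. This gives $\sum d_i^2=n(q+1)^2-2(q+1)R_1+\sum r_i^2$ and $\sum d_ir_i=(q+1)R_1-\sum r_i^2$.

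We must subtract $(q+1)^3$ and $q\sum_{i\ge2}\mu_i=q(R_1-(q+1))$. The goal is to show that the remainder equals $\sum r_i^3-2\sum r_i^2+R_1=R_3$. The coefficient of $\sum r_i^2$ is $1-3=-2$, which matches. The coefficient of $R_1$ is $-2(q+1)+3(q+1)-q=1$, which also matches. The constant term is $n(q+1)^2-n(n-1)-(q+1)^3+q(q+1)$. Since $n-1=q(q+1)$, this becomes
\[
(q+1)\bigl[n(q+1)-nq-(q+1)^2+q\bigr]=(q+1)\bigl[n-q^2-q-1\bigr]=0 .
\]
Hence $\sum_{i\ge2}(\mu_i^3-q\mu_i)\ge R_3$.

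The only nontrivial input is the inequality \eqref{eq:triangle-lower}. The main care needed is the bookkeeping of the constant terms, which cancel because $n=q^2+q+1$.
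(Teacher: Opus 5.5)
Your proposal is correct and follows essentially the same route as the paper: trace computations for $Q$, $Q^2$ and $Q^3$ via Lemma~\ref{lem:adjacency-walks}, the triangle bound \eqref{eq:triangle-lower} for the cubic term, and subtraction of the contribution of $\mu_1=q+1$. Your bookkeeping, which matches the coefficients of $\sum_i r_i^3$, $\sum_i r_i^2$, $R_1$ and the constant term, is a slightly more organized version of the paper's direct substitution, and all the cancellations you claim check out.
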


\begin{proof}
Since
\[
\sum_{i=1}^n d_i=2e(G)=2m_q-2h,
\]
we have
\begin{equation*}
R_1=\sum_{i=1}^n(q+1-d_i)=n(q+1)-2e(G)
=n(q+1)-2m_q+2h=q+1+2h.
\end{equation*}
Since $\tr(Q(G))=R_1$ and $\mu_1=q+1$, we obtain
\[
\sum_{i=2}^n\mu_i=\tr(Q(G))-(q+1)=R_1-(q+1)=2h.
\]

Let $D_r=\operatorname{diag}(r_{1},r_{2},\ldots,r_{n})$ be a diagonal matrix. Since $Q(G)=A(G)+D_r$,
\[
\tr(Q(G)^2)=\tr(A(G)^2)+\tr(A(G)D_r)+\tr(D_rA(G))+\tr(D_r^2).
\]
Since the diagonal entries of $A(G)$ are zero, $\tr(A(G)D_r)=\tr(D_rA(G))=0$. By Lemma~\ref{lem:adjacency-walks},
\begin{equation*}
\begin{split}
\tr(Q(G)^2)=\tr(A(G)^2)+\sum_{i=1}^n r_i^2=2e(G)+\sum_{i=1}^n r_i^2
=2m_q-2h+\sum_{i=1}^n r_i^2.
\end{split}
\end{equation*}
Hence
\begin{align*}
\sum_{i=2}^n(\mu_i^2-q)
&=\tr(Q(G)^2)-(q+1)^2-q(n-1)\\
&=2m_q-2h+\sum_{i=1}^n r_i^2-(q+1)^2-q(n-1)\\
&=\sum_{i=1}^n r_i^2-(q+1)-2h\\
&=\sum_{i=1}^n r_i^2-R_1=\sum_{i=1}^n(r_i^2-r_i)=R_2.
\end{align*}

Observe that $\tr(A(G)D_r^2)=0$ and $\tr(A(G)^2D_r)=\sum_{i=1}^n d_ir_i$. For the cubic moment, by Lemma~\ref{lem:adjacency-walks},
\begin{equation*}
\begin{split}
\tr(Q(G)^3)&=\tr(A(G)^3)+3\tr(A(G)^2D_r)+3\tr(A(G)D_r^2)+\tr(D_r^3)\\
&=6t(G)+3\sum_{i=1}^n d_ir_i+\sum_{i=1}^n r_i^3.
\end{split}
\end{equation*}
Recall that  $n=q^2+q+1$, $d_i=q+1-r_i$ and $R_1=q+1+2h$. Using \eqref{eq:triangle-lower}, we obtain
\begin{align*}
\tr(Q(G)^3)&\ge\sum_{i=1}^n d_i^2-n(n-1)+3\sum_{i=1}^n d_ir_i+\sum_{i=1}^n r_i^3\\
&=\sum_{i=1}^n (q+1-r_i)(q+1)
-n(n-1)+2\sum_{i=1}^n (q+1-r_i)r_i+\sum_{i=1}^n r_i^3\\
&=n(q+1)^2-n(n-1)+(q+1)R_1-2\sum_{i=1}^n r_i^2+\sum_{i=1}^n r_i^3\\
&=(q+1)^{3}+2qh+q+1+2h-2\sum_{i=1}^n r_i^2+\sum_{i=1}^n r_i^3.
\end{align*}
Since $\mu_1=q+1$, we have
\[
\sum_{i=2}^n\mu_i^3=\tr(Q(G)^3)-(q+1)^3.
\]
By \eqref{eq:mom1}, $q\sum_{i=2}^n\mu_i=2qh$. Therefore,
\begin{align*}
\sum_{i=2}^n(\mu_i^3-q\mu_i)&=\tr(Q(G)^3)-(q+1)^3-2qh\\
&\ge q+1+2h-2\sum_{i=1}^n r_i^2+\sum_{i=1}^n r_i^3\\
&= \sum_{i=1}^n r_i-2\sum_{i=1}^n r_i^2+\sum_{i=1}^n r_i^3\\
&=\sum_{i=1}^n r_i(r_i-1)^2=R_3.
\end{align*}
Thus, \eqref{eq:mom1} holds. This completes the proof.
\end{proof}

We now recall some standard facts about Hermite interpolation. Let $x_0,x_1,\ldots,x_s$ be distinct real numbers and let
$\nu_0,\nu_1,\ldots,\nu_s$ be positive integers. Set $N=\sum_{i=0}^s\nu_i$. For an $N$ times differentiable function $f(x)$, the {\it Hermite interpolating polynomial} corresponding to the nodes $x_0,x_1,\ldots,x_s$ with multiplicities $\nu_0,\nu_1,\ldots,\nu_s$ is the unique polynomial $P(x)$ of degree at most $N-1$ satisfying
\[P^{(k)}(x_i)=f^{(k)}(x_i),\qquad0\le i\le s,\quad 0\le k\le\nu_i-1.\]
Thus, the multiplicity $\nu_i$ means that $P(x)$ and $f(x)$ agree at $x_i$ up to derivatives of order $\nu_i-1$. In particular, if $\nu_i=1$ for every $i$, this reduces to ordinary polynomial interpolation. We shall use the following standard remainder formula; see \cite[Section~2.1.5]{StoerBulirsch2002}.

\begin{lemma}[\hspace{1sp}{\cite{StoerBulirsch2002}}]\label{lem:Hermite-remainder}
Let $x_0,x_1,\ldots,x_s$ be distinct real numbers, let $\nu_0,\nu_1,\ldots,\nu_s$ be positive integers, and set
$N=\sum_{i=0}^s\nu_i$. Suppose that $f(x)$ is $N$ times differentiable on an interval containing $x_0,x_1,\ldots,x_s,x$, and let $P(x)$ be the Hermite interpolating polynomial of degree at most $N-1$ satisfying
\[P^{(k)}(x_i)=f^{(k)}(x_i),\qquad0\le i\le s,\quad 0\le k\le\nu_i-1.\]
Then there exists a point $\xi$ in the smallest interval containing $x_0,x_1,\ldots,x_s,x$ such that
\begin{equation*}
f(x)-P(x)=\frac{f^{(N)}(\xi)}{N!}\prod_{i=0}^s(x-x_i)^{\nu_i}.
\end{equation*}
\end{lemma}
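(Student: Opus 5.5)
The plan is the classical auxiliary-function argument combined with a repeated, multiplicity-aware application of Rolle's theorem. Put $w(t)=\prod_{i=0}^s(t-x_i)^{\nu_i}$, a monic polynomial of degree $N$. If $x$ equals one of the nodes $x_i$, then $f(x)=P(x)$ by the interpolation conditions with $k=0$, and $w(x)=0$. So both sides of the formula vanish and any $\xi$ works. We may therefore assume that $x$ is distinct from all nodes, so that $w(x)\neq0$.

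Set $K=(f(x)-P(x))/w(x)$ and define
\[
g(t)=f(t)-P(t)-K\,w(t).
\]
This function is $N$ times differentiable on the smallest interval $I$ containing $x_0,\ldots,x_s,x$.

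\begin{enumerate}[label=(\arabic*)]
\item At each node $x_i$ we have $g^{(k)}(x_i)=0$ for $0\le k\le\nu_i-1$. This holds because $P$ matches $f$ there up to order $\nu_i-1$, and $w$ has a zero of order $\nu_i$ at $x_i$. In addition, $g(x)=0$ by the choice of $K$. So $g$ has at least $N+1$ zeros in $I$, counted with multiplicity, located at the $s+2$ distinct points $x_0,\ldots,x_s,x$.
\item We prove by induction on $k$ that $g^{(k)}$ has at least $N+1-k$ zeros in $I$, counted with multiplicity, for $0\le k\le N$. Suppose $g^{(k)}$ has zeros at distinct points $y_1<\dots<y_p$ with multiplicities $m_1,\dots,m_p$, where $\sum m_j\ge N+1-k$.
\begin{itemize}
\item Each zero $y_j$ of multiplicity $m_j$ for $g^{(k)}$ is a zero of multiplicity at least $m_j-1$ for $g^{(k+1)}$.
\item Rolle's theorem on each interval $(y_j,y_{j+1})$ gives $p-1$ further zeros of $g^{(k+1)}$. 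These are new, since they lie strictly between the $y_j$.
\end{itemize}
The total is at least $\sum(m_j-1)+(p-1)=\sum m_j-1\ge N-k$. Only differentiability of $g^{(k)}$ on $I$ is needed for this step.
\item Taking $k=N$ yields a point $\xi\in I$ with $g^{(N)}(\xi)=0$. Since $\deg P\le N-1$, we have $P^{(N)}\equiv0$. Since $w$ is monic of degree $N$, we have $w^{(N)}\equiv N!$. Hence $0=f^{(N)}(\xi)-K\,N!$, so $K=f^{(N)}(\xi)/N!$. Substituting into the definition of $K$ gives the stated formula.
\end{enumerate}

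The main point requiring care is the bookkeeping in step (2). At each stage one must treat ``zeros counted with multiplicity'' as orders of vanishing of the current derivative. One must also make sure that the Rolle zeros produced between consecutive distinct zeros are genuinely additional to the inherited ones, which holds because they lie strictly inside the open gaps. Everything else in the argument is routine.
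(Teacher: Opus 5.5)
Your argument is correct: it is the classical auxiliary-function proof, applying Rolle's theorem $N$ times to $f-P-Kw$ with zeros counted by order of vanishing, and your bookkeeping is sound (inherited zeros of multiplicity $m_j-1$, plus $p-1$ new Rolle zeros in the open gaps, with all derivative orders involved staying $\le N$). The paper gives no proof of this lemma and simply cites Stoer--Bulirsch, whose proof is this same argument, so your route coincides with the intended one.
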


\section{Polarity graphs}

A projective plane of order $q$ has $q^2+q+1$ points and $q^2+q+1$ lines. Each line contains exactly $q+1$ points, and each point lies on exactly $q+1$ lines. Moreover, any two distinct points lie on a unique line, and any two distinct lines meet in a unique point. We record three facts about polarity graphs that will be used in the proof of the main result.

First, by the Matrix-Tree Theorem, London~\cite{London2026} obtained the following exact formula for the number of spanning trees of a polarity graph.
\begin{lemma}[\hspace{1sp}{\cite{London2026}}]
\label{lem:London-polarity}
Let $G$ be a polarity graph arising from a projective plane of order $q$, and let $n=q^2+q+1$. Then
\[
\tau(G)=n^{(n-3)/2}\left(\frac{q+1-\sqrt q}{q+1+\sqrt q}\right)^{(m_q-e(G))/\sqrt q}.
\]
\end{lemma}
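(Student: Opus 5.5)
The plan is to show that the Laplacian of a polarity graph is a simple function of the symmetric point--line incidence matrix of the plane. That matrix has a very rigid spectrum, and the Matrix-Tree Theorem (Theorem~\ref{thm:matrix-tree}) then gives $\tau(G)$ directly.

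\textbf{Step 1: the incidence matrix.} Let $\varphi$ be the polarity, let $n=q^2+q+1$, and index points by $\mathcal P$. Define the $n\times n$ matrix $M$ by $M_{xy}=1$ if $x\in\varphi(y)$ and $0$ otherwise. Since $x\in\varphi(y)\Leftrightarrow y\in\varphi(x)$, $M$ is symmetric. Its diagonal is the indicator $D_\alpha$ of the absolute points, and $M=A(G)+D_\alpha$. Every line has $q+1$ points, so each row of $M$ sums to $q+1$. Hence $d_x=q+1-(D_\alpha)_{xx}$, and therefore
\[L(G)=(q+1)I_n-D_\alpha-A(G)=(q+1)I_n-M.\]

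\textbf{Step 2: spectrum of $M$.} The entry $(M^2)_{xy}$ counts the lines $\varphi(z)$ containing both $x$ and $y$. This equals $q+1$ if $x=y$ and $1$ otherwise, so $M^2=qI_n+J_n$. Thus $\mathrm j_n$ is an eigenvector with eigenvalue $q+1$. On $\mathrm j_n^\perp$ we have $M^2=qI$, so the remaining eigenvalues are $\pm\sqrt q$, with multiplicities $a$ (for $+\sqrt q$) and $b$ (for $-\sqrt q$), where $a+b=n-1$. Let $\alpha$ be the number of absolute points. Taking traces gives $\alpha=\tr M=q+1+\sqrt q\,(a-b)$. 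Counting edges gives $2e(G)=n(q+1)-\alpha$. Since $n(q+1)-2m_q=q+1$, this yields
\[m_q-e(G)=\tfrac12\bigl(\alpha-(q+1)\bigr)=\tfrac{\sqrt q}{2}(a-b).\]

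\textbf{Step 3: count the trees.} By Step 1, the nonzero Laplacian eigenvalues are $q+1-\sqrt q$ with multiplicity $a$ and $q+1+\sqrt q$ with multiplicity $b$. Both are positive, so $G$ is connected. Theorem~\ref{thm:matrix-tree} gives
\[n\tau(G)=(q+1-\sqrt q)^a(q+1+\sqrt q)^b.\]
Now use $(q+1-\sqrt q)(q+1+\sqrt q)=q^2+q+1=n$ and write $a=\frac{n-1}{2}+\frac{a-b}{2}$, $b=\frac{n-1}{2}-\frac{a-b}{2}$. This gives
\[n\tau(G)=n^{(n-1)/2}\Bigl(\tfrac{q+1-\sqrt q}{q+1+\sqrt q}\Bigr)^{(a-b)/2}.\]
Substituting $(a-b)/2=(m_q-e(G))/\sqrt q$ from Step 2 yields the claimed formula.

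There is no real obstacle here. The only points needing care are, first, that the loops at absolute points are exactly what makes $L=(q+1)I-M$ hold on the nose; and second, the bookkeeping linking $a-b$ to the edge deficit. No integrality issue arises for non-square $q$, since the identity is purely algebraic.
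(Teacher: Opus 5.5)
Your proof is correct and complete. The paper does not reprove this lemma but quotes it from London, noting only that it follows from the Matrix-Tree Theorem, and your argument is the natural way to carry out that computation: you identify $L(G)=(q+1)I_n-M$ with $M^2=qI_n+J_n$, read off the spectrum $\{q+1,\pm\sqrt q\}$, and turn the multiplicity difference $a-b$ into the edge deficit $m_q-e(G)$ via $\operatorname{tr}M$. It also matches the paper's own setup, since for a polarity graph your $M$ is exactly $Q(G)=(q+1)I_n-L(G)$, and the identity $Q(G)^2=qI_n+J_n$ is what drives the equality case of Theorem~\ref{thm:uniform-bound} through Lemma~\ref{lem:matrix-polarity}.
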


The next lemma concerns the vertex degrees of polarity graphs; see also \cite[Lemma~2.3]{HMY2021}.

\begin{lemma}
	\label{pola-de}
Let $G=G(\varphi)$ be a polarity graph arising from a projective plane of order $q$. Then, for every $i\in V(G)$,
\[
d_i=
\begin{cases}
q, & \text{if $i$ is an absolute point},\\
q+1, & \text{otherwise}.
\end{cases}
\]
Consequently, if $r_i=q+1-d_i$, then $r_i=1$ precisely when $i$ is an absolute point and $r_i=0$ otherwise. In particular, $\sum_{i\in V(G)}r_i$ is the number of absolute points of $\varphi$.
\end{lemma}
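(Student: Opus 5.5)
The plan is to compute the neighbourhood of each vertex of $G(\varphi)$ directly from the definition of a polarity, rather than invoking anything spectral. Fix a point $x\in\mathcal P$. I would first check that adjacency in $G(\varphi)$ is symmetric, so that the graph is well defined. Apply the incidence-reversing property of $\varphi$ to the point $x$ and the line $\varphi(y)$: this gives $x\in\varphi(y)$ if and only if $\varphi(\varphi(y))\in\varphi(x)$. Since $\varphi^2$ is the identity, the right-hand side is $y\in\varphi(x)$. Hence the relation $x\in\varphi(y)$ is symmetric in $x$ and $y$.

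Next I would identify the neighbourhood precisely. By definition, a point $y\neq x$ is adjacent to $x$ exactly when $x\in\varphi(y)$, which by the symmetry just shown means $y\in\varphi(x)$. Therefore
\[
N(x)=\varphi(x)\setminus\{x\},
\]
where $\varphi(x)$ is viewed as the set of $q+1$ points on that line. It follows that $d_x=q+1$ if $x\notin\varphi(x)$. If $x\in\varphi(x)$, that is, if $x$ is absolute, then $d_x=q$, since loops are excluded from the simple graph.

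The consequences are then immediate. The quantity $r_x=q+1-d_x$ equals $1$ for absolute points and $0$ otherwise. Summing over all vertices, $\sum_i r_i$ counts the absolute points of $\varphi$.

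There is no real obstacle here. The only point needing care is the symmetry step, which uses both the involution property $\varphi^2=\mathrm{id}$ and the incidence-reversing property of $\varphi$. One could also remark that, combined with $\sum_i d_i=2e(G)$, this yields $e(G)=m_q-\tfrac12(a-(q+1))$, where $a$ is the number of absolute points. This is the link to $m_q$ used in Corollary~\ref{cor:edge-bound}.
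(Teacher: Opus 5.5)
Your proposal is correct and follows the same route as the paper: the neighbourhood of $x$ is the polar line $\varphi(x)$ with $x$ removed, and this line has $q+1$ points, so $d_x=q$ exactly when $x$ is absolute. You also write out the symmetry step $x\in\varphi(y)\iff y\in\varphi(x)$, which the paper's one-line proof leaves implicit.
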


\begin{proof}
Each polar line contains exactly \(q+1\) points. Hence a vertex has degree \(q\) if it is an absolute point, and degree \(q+1\) otherwise. Thus Lemma~\ref{pola-de} follows directly.
\end{proof}

For an incidence structure with points $p_1,p_2,\ldots,p_a$ and lines $\ell_1,\ell_2,\ldots,\ell_b$, its incidence matrix is the $0$--$1$ matrix $M=(M_{ij})_{b\times a}$
defined by
\[M_{ij}=1\quad\text{if and only if}\quad p_j\in\ell_i.\]
A projective plane is an incidence structure in which any two distinct points lie on a unique line, any two distinct lines meet in a unique point, and there exist four points no three of which are collinear. It has order $q$ if every line contains exactly $q+1$ points. In this case, every point lies on exactly $q+1$ lines, and there are $q^2+q+1$ points and $q^2+q+1$ lines. Hence the incidence matrix of a projective plane of order $q$ is square. Finally, we record a matrix characterization that will be useful in the equality case.

\begin{lemma}\label{lem:matrix-polarity}
Let $q\ge2$ and $n=q^2+q+1$. Suppose that $M$ is a symmetric $0$--$1$ matrix of order $n$ satisfying
\[M^2=qI_n+J_n.\]
Then $M$ is the incidence matrix of a projective plane of order $q$, and it induces a polarity of this plane. Consequently, the matrix obtained from $M$ by replacing its diagonal entries by zero is the adjacency matrix of the corresponding polarity graph.
\end{lemma}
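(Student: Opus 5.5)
The plan is to read off the projective plane directly from the equation $M^2=qI_n+J_n$. Index the rows and columns of $M$ by $\{1,\dots,n\}$. Take a set of points $p_1,\dots,p_n$ and a set of lines $\ell_1,\dots,\ell_n$, and declare $p_j\in\ell_i$ if and only if $M_{ij}=1$.

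\textbf{Step 1: row sums and intersections.} Since $M$ is symmetric, the $(i,j)$-entry of $M^2$ is the inner product of rows $i$ and $j$. This gives two facts.
\begin{itemize}
\item Comparing diagonal entries and using $M_{ik}^2=M_{ik}$, every row has exactly $q+1$ ones. So every line contains $q+1$ points, and by symmetry every point lies on $q+1$ lines.
\item Comparing off-diagonal entries, distinct rows share exactly one common $1$. So two distinct lines meet in exactly one point. Applying the same identity to columns (again by symmetry), two distinct points lie on exactly one common line.
\end{itemize}

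\textbf{Step 2: nondegeneracy.} I must still exhibit four points, no three collinear. Here I would use the standard argument that a linear space with these parameters and $q\ge2$ is a projective plane. Take two distinct lines $\ell,\ell'$ meeting at a point $x$. Since $q\ge2$, each line has at least $3$ points, so we can choose $a,b\in\ell\setminus\{x\}$ and $c,d\in\ell'\setminus\{x\}$. No three of $a,b,c,d$ are collinear: a line through $a$ and $b$ is $\ell$, which meets $\ell'$ only in $x$ and so contains neither $c$ nor $d$; the other cases are symmetric. So $M$ is the incidence matrix of a projective plane of order $q$.

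\textbf{Step 3: the polarity.} Define $\varphi(p_i)=\ell_i$ and $\varphi(\ell_i)=p_i$. Then $\varphi$ is a bijection exchanging points and lines with $\varphi^2=\mathrm{id}$. The incidence condition holds by symmetry of $M$:
\[
p_j\in\ell_i \iff M_{ij}=1 \iff M_{ji}=1 \iff p_i\in\ell_j \iff \varphi(\ell_i)\in\varphi(p_j).
\]
The absolute points are exactly the $p_i$ with $M_{ii}=1$. For distinct $i,j$, the vertices $p_i$ and $p_j$ are adjacent in $G(\varphi)$ if and only if $p_i\in\varphi(p_j)=\ell_j$, that is, if and only if $M_{ji}=1$. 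Hence zeroing the diagonal of $M$ gives exactly the adjacency matrix of $G(\varphi)$.

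\textbf{Main obstacle.} There is no real difficulty. The only point needing care is Step 2, the nondegeneracy axiom, and this is where the hypothesis $q\ge2$ enters.
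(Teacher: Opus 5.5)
Your proof is correct and follows the same route as the paper: rows of $M$ serve as lines, inner products of rows and of columns (using symmetry, $M^{\top}M=M^2$) give the incidence axioms, and the symmetry of $M$ gives the polarity and the adjacency matrix. The only variation is in the nondegeneracy step, where you take two lines through a common point $x$ and two further points on each (using $q\ge2$), whereas the paper completes a triangle $x,y,z$ and uses $n-3q=(q-1)^2>0$ to find a fourth point off its three sides; both arguments are valid.
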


\begin{proof}
Take $\{1,2,\ldots,n\}$ as the point set, and for each $1\le i\le n$, define
\[\ell_i=\{j\in\{1,2,\ldots,n\}: M_{ij}=1\}.\]
We regard $\ell_1,\ell_2,\ldots,\ell_n$ as the lines. Since $M$ is symmetric and has entries in $\{0,1\}$,
\[|\ell_i|=\sum_{k=1}^n M_{ik}^2=(M^2)_{ii}=q+1.\]
Thus every $\ell_i$ contains exactly $q+1$ points. For distinct $i,j$,
\begin{equation}\label{hhh-1.1}
|\ell_i\cap\ell_j|=\sum_{k=1}^n M_{ik}M_{jk}=(MM^{\top})_{ij}=(M^2)_{ij}=1.
\end{equation}
Hence every two distinct lines meet in exactly one point.

Similarly, since $M^{\top}M=M^2$, for distinct points $i,j$, the entry $(M^{\top}M)_{ij}=1$ shows that there is exactly one line containing both $i$ and $j$. Thus every two distinct points lie on a unique line. The lines $\ell_1,\ell_2,\ldots,\ell_n$ are also distinct, since two equal rows would have inner product $q+1$, whereas two distinct rows have inner product $1$ by \eqref{hhh-1.1}.

It remains to show that there exist four points no three of which are collinear. Choose distinct points $x,y$ on a line and a point $z$ outside that line. Note that such a point $z$ exists because $n=q^2+q+1>q+1$. Then $x,y,z$ are noncollinear. Let $\ell_{xy}$, $\ell_{xz}$, and $\ell_{yz}$ denote the unique lines through the pairs $\{x,y\}$,
$\{x,z\}$, and $\{y,z\}$, respectively. These three lines are distinct and together contain $3(q+1)-3=3q$ points. Since $n-3q=(q-1)^2>0$, there is a point outside their union. Hence there exist four points no three of which are collinear. Therefore the incidence structure formed by the points $1,2,\ldots,n$ and the lines $\ell_1,\ell_2,\ldots,\ell_n$ is a projective plane of order $q$.

Now define
\[\varphi(i)=\ell_i,\qquad\varphi(\ell_i)=i,\qquad 1\le i\le n.\]
Since the lines $\ell_1,\ell_2,\ldots,\ell_n$ are distinct, $\varphi$ is a bijection between the point set and the line set, and $\varphi^2$ is the identity.

It remains to verify the incidence condition for a polarity. Let $i$ be a point and let $\ell_j$ be a line. If $i\in\ell_j$, then $M_{ji}=1$. Since $M$ is symmetric, $M_{ij}=1$, and hence $j\in\ell_i$. The converse follows in the same way. Therefore
\[
i\in\ell_j\quad\Longleftrightarrow\quad\varphi(\ell_j)\in\varphi(i).
\]
Thus $\varphi$ is a polarity of the projective plane.

Finally, for distinct points $i,j$, the symmetry of $M$ gives
\[M_{ij}=1\quad\Longleftrightarrow\quad i\in\ell_j=\varphi(j).
\]
Hence, after replacing the diagonal entries of $M$ by zero, the resulting matrix is precisely the adjacency matrix of the polarity graph associated with $\varphi$.
\end{proof}

\section{Some auxiliary functions and inequalities}
Throughout this section, let $q\ge2$ be an integer and $n=q^2+q+1$. Recall $A_q$ from~\eqref{intro-cons}, and define
\begin{equation}\label{eq:AandB}
B_q=\frac{(q+1)/n-A_q}{2q}.
\end{equation}

\begin{lemma}\label{se4l1-e1}
For any integer $q\ge 2$,
\begin{equation}
	\label{eq:uniform-coefficients}
A_q>\frac{1}{q+1}+\frac{q}{3(q+1)^3},
\end{equation}
\begin{equation}\label{eq:B-upper}
0<B_q<\frac{2(q+1)^2+q}{6n(q+1)^3},
\end{equation}
and
\begin{equation}\label{eq:B-small}
B_q(q+2)<\frac{1}{2n}.
\end{equation}
\end{lemma}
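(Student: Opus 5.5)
The plan is to reduce everything to the series expansion of the logarithm defining $A_q$. Put $t=\sqrt q/(q+1)\in(0,1)$. Then
\[
A_q=\frac{1}{2\sqrt q}\ln\frac{1+t}{1-t}=\frac{1}{\sqrt q}\sum_{k\ge0}\frac{t^{2k+1}}{2k+1}=\frac{1}{q+1}\sum_{k\ge0}\frac{1}{2k+1}\Bigl(\frac{q}{(q+1)^2}\Bigr)^{k}.
\]
All terms are positive, so keeping the terms $k=0,1$ and discarding the strictly positive remainder gives \eqref{eq:uniform-coefficients} immediately:
\[
A_q>\frac1{q+1}+\frac{q}{3(q+1)^3}.
\]

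For \eqref{eq:B-upper}, the upper bound on $B_q$ is equivalent to a lower bound on $A_q$, since $2qB_q=(q+1)/n-A_q$. Substituting \eqref{eq:uniform-coefficients}, it suffices to check
\[
\frac{q+1}{n}-\frac1{q+1}-\frac{q}{3(q+1)^3}\le\frac{2q\bigl(2(q+1)^2+q\bigr)}{6n(q+1)^3}.
\]
Using $(q+1)^2-n=q$, one has
\[
\frac{q+1}{n}-\frac1{q+1}=\frac{q}{n(q+1)}.
\]
Multiplying through by $3n(q+1)^3/q$ turns the inequality into
\[
3(q+1)^2-n\le 2(q+1)^2+q,
\]
that is, $(q+1)^2\le n+q$. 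This holds with equality. Strictness then comes from the strict inequality in \eqref{eq:uniform-coefficients}.

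For the positivity $B_q>0$, I need the upper bound $A_q<(q+1)/n$. I will bound the series above by a geometric series. With $x=q/(q+1)^2$,
\[
\sum_{k\ge0}\frac{x^k}{2k+1}<1+\frac{x}{3}\sum_{k\ge0}x^k=1+\frac{x}{3(1-x)}.
\]
Since $1-x=n/(q+1)^2$, this gives
\[
A_q<\frac1{q+1}\Bigl(1+\frac{q}{3n}\Bigr).
\]
It then suffices that
\[
1+\frac{q}{3n}\le\frac{(q+1)^2}{n}=1+\frac qn,
\]
which is clearly true. I expect this small tail estimate to be the only point needing care, and the crude geometric bound is enough.

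Finally, \eqref{eq:B-small} follows from \eqref{eq:B-upper}. It suffices to show
\[
(q+2)\bigl(2(q+1)^2+q\bigr)\le 3(q+1)^3.
\]
Expanding, the left side is $2q^3+9q^2+12q+4$ and the right side is $3q^3+9q^2+9q+3$. The difference is $q^3-3q-1$, which is positive for $q\ge2$ (it equals $1$ at $q=2$ and is increasing there). So the whole lemma is routine algebra once the series for $\operatorname{artanh}$ is written down.
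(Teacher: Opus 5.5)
Your proof is correct and is essentially the paper's argument. Your series $\sum_{k\ge0}x^k/(2k+1)$ with $x=q/(q+1)^2$ is the termwise integral of the paper's representation $A_q=\frac{1}{q+1}\int_0^1\frac{dt}{1-xt^2}$, and truncating after $k=1$ is the paper's use of $\frac{1}{1-y}\ge 1+y$; the algebra for the upper bound on $B_q$ and for $B_q(q+2)<\frac{1}{2n}$ (including $q^3-3q-1>0$) is identical. The one difference is that the paper gets $A_q<(q+1)/n$ by bounding the integrand by $1/(1-x)$, i.e.\ by $\sum_k x^k$, which is cruder than your geometric tail bound but equally sufficient.
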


\begin{proof}
By the definition of $A_q$ and direct integration,
\[A_q=(q+1)\int_0^1\frac{dt}{(q+1)^2-qt^2}=\frac{1}{q+1}\int_0^1\frac{dt}{1-\frac{q}{(q+1)^2}t^2}.\]
Since $\frac{1}{1-x}\geq1+x$ for $0\leq x<1$, with strict inequality for $x>0$, we have
\[
A_q>\frac{1}{q+1}\int_0^1\left(1+\frac{q}{(q+1)^2}t^2\right)dt=\frac{1}{q+1}+\frac{q}{3(q+1)^3},
\]
which proves \eqref{eq:uniform-coefficients}.

Moreover, since $0\le t^2\le1$,
\[\frac{1}{1-\frac{q}{(q+1)^2}t^2}\le\frac{1}{1-\frac{q}{(q+1)^2}},\]
with strict inequality for $0\le t<1$. Hence
\[
A_q<\frac{1}{q+1}\int_0^1\frac{dt}{1-\frac{q}{(q+1)^2}}=\frac{q+1}{n},
\]
where we have used $n=(q+1)^2-q$. Hence $B_q>0$. Combining this with \eqref{eq:uniform-coefficients},
\[
B_q<\frac{1}{2q}\left(\frac{q+1}{n}-\frac{1}{q+1}-\frac{q}{3(q+1)^3}\right)=\frac{2(q+1)^2+q}{6n(q+1)^3},
\]
which proves \eqref{eq:B-upper}.

Finally, since $q\ge2$,
\[
\frac{1}{2n}-\frac{(q+2)(2(q+1)^2+q)}{6n(q+1)^3}=\frac{q^3-3q-1}{6n(q+1)^3}>0.
\]
Together with \eqref{eq:B-upper}, this gives
\[
B_q(q+2)<\frac{1}{2n},
\]
and hence \eqref{eq:B-small} follows.
\end{proof}

\begin{lemma}
	\label{lem:uniform-tail}
Let $q\ge2$ be an integer. For every $x\in\mathbb R$ with $x<q+1$, let
\[
P(x)=\frac12\ln n-A_qx-\frac{x^2-q}{2n}-B_q(x^3-qx),
\]
and let
\[
g(x)=P(x)-\ln(q+1-x).
\]
Then $g(x)\ge0$, with equality if and only if $x\in\{-\sqrt q,\sqrt q\}$. Moreover, the function
\[
\phi(x)=
\begin{cases}
g(x),&\text{if }x\le-\sqrt q,\\
0,&\text{if }x>-\sqrt q.
\end{cases}
\]
is convex and non-increasing on $\mathbb R$.
\end{lemma}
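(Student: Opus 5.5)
The plan is to recognise $P$ as a Hermite interpolant and to use the remainder formula of Lemma~\ref{lem:Hermite-remainder}. Put $f(x)=\ln(q+1-x)$ for $x<q+1$. Since $P$ is a cubic, it suffices to check that $P$ agrees with $f$ to first order at the two nodes $\pm\sqrt q$, each taken with multiplicity $2$, so that $N=4$.

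\textbf{Interpolation conditions.} At the nodes,
\[
P(\pm\sqrt q)=\tfrac12\ln n\mp A_q\sqrt q .
\]
Summing the two values gives $\ln n=\ln\bigl((q+1)^2-q\bigr)=f(\sqrt q)+f(-\sqrt q)$. Subtracting them gives $-2A_q\sqrt q=\ln\frac{q+1-\sqrt q}{q+1+\sqrt q}$, by the definition of $A_q$ in \eqref{intro-cons}. For the first derivatives, $P'(x)=-A_q-x/n-B_q(3x^2-q)$. By \eqref{eq:AandB} we have $2qB_q=(q+1)/n-A_q$, hence
\[
P'(\pm\sqrt q)=-\frac{q+1\pm\sqrt q}{n}=-\frac{1}{q+1\mp\sqrt q}=f'(\pm\sqrt q).
\]
Thus $P$ is the Hermite interpolant of $f$ at $\pm\sqrt q$ with multiplicities $2,2$.

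\textbf{Nonnegativity and equality.} Since $f^{(4)}(x)=-6/(q+1-x)^4$, Lemma~\ref{lem:Hermite-remainder} gives
\[
g(x)=P(x)-f(x)=\frac{(x^2-q)^2}{4(q+1-\xi)^4}
\]
for some $\xi<q+1$. This is nonnegative and vanishes exactly when $x=\pm\sqrt q$.

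\textbf{Reduction of the convexity claim.} Both pieces of $\phi$ are $C^1$ and they glue with matching value and derivative, since $g(-\sqrt q)=g'(-\sqrt q)=0$. So $\phi$ is $C^1$, and it is convex and non-increasing as soon as $g''\ge0$ on $(-\infty,-\sqrt q]$. Indeed, $g$ is then convex there with $g'(-\sqrt q)=0$, which forces $g'\le0$ on that ray, while $\phi$ is constant to the right.

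\textbf{Main step: $g''\ge0$ on $(-\infty,-\sqrt q]$.} We have
\[
g''(x)=-\frac1n-6B_qx+\frac1{(q+1-x)^2},\qquad g'''(x)=\frac{2}{(q+1-x)^3}-6B_q .
\]
First, $g''(-\sqrt q)>0$. To see this, write $g=(x-\sqrt q)^2(x+\sqrt q)^2k(x)$ with $k$ smooth and $k(-\sqrt q)>0$; the positivity of $k(-\sqrt q)$ comes from the remainder formula, equivalently from $g\ge 0$ together with $g^{(4)}=6/(q+1-x)^4\neq0$. Then
\[
g''(-\sqrt q)=2\cdot 4q\cdot k(-\sqrt q)>0 .
\]
Next, $g'''$ is increasing in $x$. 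If $g'''(-\sqrt q)\le0$, then $g''$ is non-increasing on $(-\infty,-\sqrt q]$, so $g''\ge g''(-\sqrt q)>0$ there and we are done. This requires $6B_q\ge 2/(q+1+\sqrt q)^3$, that is, a \emph{lower} bound on $B_q$ of order roughly $1/(3q^3)$.

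\textbf{Lower bound on $B_q$ (expected obstacle).} Lemma~\ref{se4l1-e1} only bounds $B_q$ from above, so I would derive the needed lower bound by sharpening the integral representation of $A_q$ used there. Let $u=q/(q+1)^2\le 2/9$. I would bound $\frac1{1-ut^2}$ from above by $1+ut^2+\frac{u^2t^4}{1-u}$ and integrate, which gives an upper bound on $A_q$ and hence a lower bound on $B_q$. Comparing the resulting rational expressions in $q$ and $\sqrt q$ is the step I expect to be the main computational obstacle; I would check it for $q\ge2$ by clearing denominators, possibly treating small $q$ separately.

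\textbf{Fallback.} If that comparison fails for some $q$, then $g''$ attains its minimum on the ray at the unique point $x_0<-\sqrt q$ where $(q+1-x_0)^3=1/(3B_q)$. In that case I would instead verify directly that $g''(x_0)=-1/n-6B_qx_0+(3B_q)^{2/3}\ge0$.
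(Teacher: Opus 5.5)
Your interpolation check, the remainder-formula argument for $g\ge0$ with its equality case, and the reduction of the convexity claim to $g'''(-\sqrt q)\le0$ are all correct and match the paper. You leave that sign condition as an unverified inequality $3B_q(q+1+\sqrt q)^3\ge1$, so as written the proposal is incomplete at its main step. Your proposed estimate does settle it for every $q\ge2$, with no case split. Integrating $\frac{1}{1-ut^2}\le1+ut^2+\frac{u^2t^4}{1-u}$ and using $1-u=n/(q+1)^2$ gives
\[
A_q<\frac{1}{q+1}+\frac{q}{3(q+1)^3}+\frac{q^2}{5n(q+1)^3},\qquad\text{so}\qquad B_q>\frac{5q^2+11q+5}{15n(q+1)^3}.
\]
With $n=(q+1-\sqrt q)(q+1+\sqrt q)$, $p=q+1$ and $s=\sqrt q$, the required comparison reduces to
\[
(5p^2+p-1)(p+s)^2-5p^3(p-s)=15p^3s+5p^2s^2+(p-1)(p+s)^2>0 .
\]
So your route works, and the fallback is never triggered.

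The paper, by contrast, uses no estimate on $B_q$ at all. Because $g$ has double zeros at both $-\sqrt q$ and $\sqrt q$, repeated use of Rolle's theorem yields a zero $\eta\in(-\sqrt q,\sqrt q)$ of $g'''$. Since $g^{(4)}=6/(q+1-x)^4>0$, the function $g'''$ is strictly increasing, hence negative on $(-\infty,-\sqrt q]$. The paper also needs only $g''(-\sqrt q)\ge0$, which follows because $-\sqrt q$ is a local minimum of $g$, so your factorization argument for strict positivity is superfluous. In particular, your inequality $3B_q(q+1+\sqrt q)^3>1$ is forced by the interpolation conditions alone.

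Your route gives an explicit quantitative margin, at the cost of a sharper bound on $A_q$ than Lemma~\ref{se4l1-e1} provides. The paper's route is shorter and purely structural. You already had its key tool: it is the same Rolle-type zero counting that underlies your justification of $k(-\sqrt q)>0$, applied with total multiplicity four instead of five.
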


\begin{proof}
Let $f(x)=\ln(q+1-x)$ be a function on the interval $(-\infty,q+1)$. Using
\eqref{intro-cons},
 \eqref{eq:AandB} and $n=(q+1-\sqrt q)(q+1+\sqrt q)$, we obtain
\[
P(\pm\sqrt q)=f(\pm\sqrt q),
\qquad
P'(\pm\sqrt q)=f'(\pm\sqrt q).
\]
Thus $P(x)$ is the cubic Hermite interpolating polynomial of $f(x)$ at $-\sqrt q$ and $\sqrt q$. By Lemma  \ref{lem:Hermite-remainder}, for $x\ne\pm\sqrt q$, there exists $\xi$ in the smallest interval $\Lambda$ containing $-\sqrt q$, $\sqrt q$, and $x$ such that
\begin{equation}\label{xxx1}
f(x)-P(x)=\frac{f^{(4)}(\xi)}{4!}(x^2-q)^2.
\end{equation}
Since $x<q+1$ and $\sqrt q<q+1$, the above smallest interval $\Lambda$  is contained in $(-\infty,q+1)$. Moreover, for $y<q+1$,
\[f^{(4)}(y)=-\frac{6}{(q+1-y)^4}<0.\]
By \eqref{xxx1}, we have $f(x)<P(x)$ unless $x=\pm\sqrt q$. Equivalently, $g(x)\ge0$ for all $x<q+1$, with equality if and only if $x=\pm\sqrt q$.

It remains to prove that $\phi(x)$ is convex and non-increasing on $\mathbb R$.  Since $g(x)\ge0$ for all $x<q+1$ and $g(-\sqrt q)=0$, the point $-\sqrt q$ is a local minimum point of $g(x)$. Therefore,
\begin{equation}\label{xxx-1.1}
g''(-\sqrt q)\ge0.
\end{equation}

We next show that $g'''(x)$ has a zero in $(-\sqrt q,\sqrt q)$. Since $P'(\pm\sqrt q)=f'(\pm\sqrt q)$, we have $g'(\pm\sqrt q)=0$. Moreover,  since $g(-\sqrt q)=g(\sqrt q)=0$, Rolle's theorem gives a point $\xi\in(-\sqrt q,\sqrt q)$ such that $g'(\xi)=0$. Thus
\[
g'(-\sqrt q)=g'(\xi)=g'(\sqrt q)=0.
\]
Applying Rolle's theorem to $g'(x)$ on the intervals $[-\sqrt q,\xi]$ and $[\xi,\sqrt q]$, respectively, we obtain points $\xi_1\in(-\sqrt q,\xi)$ and $\xi_2\in(\xi,\sqrt q)$ such that $g''(\xi_1)=g''(\xi_2)=0$. Applying Rolle's theorem once more, there exists $\eta\in(\xi_1,\xi_2)\subset(-\sqrt q,\sqrt q)$ such that $g'''(\eta)=0$.

A routine computation gives
\[g'''(x)=-6B_q+\frac{2}{(q+1-x)^3},\qquad g^{(4)}(x)=\frac{6}{(q+1-x)^4}>0.
\]
Hence $g'''(x)$ is strictly increasing on $(-\infty,q+1)$. Now let $x\le-\sqrt q$. Since $\eta\in(-\sqrt q,\sqrt q)$, we have $x<\eta$, and therefore $g'''(x)<g'''(\eta)=0$. Thus $g''(x)$ is  strictly decreasing on $(-\infty,-\sqrt q]$, and by \eqref{xxx-1.1}, we have $g''(x)\ge g''(-\sqrt q)\ge0$. It follows that $g'(x)$ is strictly increasing on $(-\infty,-\sqrt q]$.  Since $g'(-\sqrt q)=0$, we have $g'(x)\le0$. Hence $g(x)$ is convex and non-increasing on $(-\infty,-\sqrt q]$. Since $\phi(x)=0$ for $x>-\sqrt q$ and
\[
g(-\sqrt q)=g'(-\sqrt q)=0,
\]
the function $\phi$ is convex and non-increasing on $\mathbb R$.
\end{proof}

\begin{lemma}\label{lem:uniform-scalar}
Let $q\ge2$ be an integer, and let $\phi(x)$ be the function defined in Lemma~\ref{lem:uniform-tail}. Then, for every integer $r\ge1-q^2$,
\begin{equation}\label{eq:uniform-scalar}
r(r-1)\left(B_q(1-r)-\frac{1}{2n}\right)\le\phi(r),
\end{equation}
with equality if and only if $r\in\{0,1\}$.
\end{lemma}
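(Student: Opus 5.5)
Write $\psi(r)=r(r-1)\bigl(B_q(1-r)-\frac{1}{2n}\bigr)$ for the left-hand side of \eqref{eq:uniform-scalar}. The plan is a case split on the integer $r$, guided by the sign of $\phi$ and the definition of $\phi$ in Lemma~\ref{lem:uniform-tail}.

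\emph{Case $r\in\{0,1\}$.} Here $r(r-1)=0$, so $\psi(r)=0$. Since $\phi(x)=0$ for $x>-\sqrt q$, also $\phi(r)=0$, and equality holds.

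\emph{Case $r\ge2$.} Now $r(r-1)>0$, $B_q>0$ by \eqref{eq:B-upper}, and $1-r<0$. Hence $B_q(1-r)-\frac1{2n}<0$, so $\psi(r)<0\le\phi(r)$ strictly. The inequality $\phi\ge0$ holds because $\phi$ is either $g\ge0$ or $0$.

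\emph{Case $r\le-1$.} Now $r(r-1)>0$ and $1-r\ge2$. Write $r=-s$ with $s\ge1$. The bracket is $B_q(1+s)-\frac1{2n}$. If $s+1\le q+2$, i.e.\ $s\le q+1$, then \eqref{eq:B-small} gives $B_q(1+s)\le B_q(q+2)<\frac1{2n}$. So $\psi(r)<0\le\phi(r)$.

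The genuine work is the range $q+2\le s\le q^2-1$, where $\psi(r)$ can be positive. This range lies in $r\le-\sqrt q$, so $\phi(r)=g(r)$. We must show
\[
g(r)-\psi(r)=\tfrac12\ln n-A_qr-\tfrac{r^2-q}{2n}-B_q(r^3-qr)-\ln(q+1-r)-r(r-1)\Bigl(B_q(1-r)-\tfrac1{2n}\Bigr)>0 .
\]
Expanding, the cubic terms $-B_qr^3$ and $+B_qr^3$ cancel, and the $r^2/(2n)$ terms also cancel. The difference reduces to
\[
\tfrac12\ln n+\tfrac{q}{2n}-\ln(q+1+s)+s\Bigl(A_q-\tfrac1{2n}\Bigr)+B_q\bigl(s^2(\text{coefficient }2)-(q+1)s\bigr)\ \text{(after simplifying)},
\]
which is an explicit function of $s$ with only one logarithm. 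I will carry out the cancellation carefully. The key check is that the surviving dominant term is $s(A_q-\tfrac1{2n})$ together with a nonnegative quadratic in $B_q$. Using $A_q>\frac1{q+1}$ from \eqref{eq:uniform-coefficients}, this term is at least about $s/(q+1)$.

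The remaining task is to show $\frac{s}{q+1}+\tfrac12\ln n\gtrsim\ln(q+1+s)$ for $s\ge q+2$. The left side grows linearly in $s$ and the right side logarithmically, so it suffices to check $s=q+2$ and that the derivative in $s$ is positive there. The derivative condition is $A_q-\frac1{2n}+(\text{$B_q$ terms})>\frac1{q+1+s}$, which holds since $\frac1{q+1+s}\le\frac1{2q+3}<A_q-\frac1{2n}$.

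At $s=q+2$ the claim is roughly $\tfrac12\ln n+1>\ln(2q+3)$, i.e.\ $e^2(q^2+q+1)>(2q+3)^2$. This is true for all $q\ge2$, but the margin must be verified with the $B_q$ terms bounded using \eqref{eq:B-upper}. I expect this explicit elementary verification, handling the $B_q$ quadratic whose sign I must confirm, to be the main obstacle. If the quadratic turns out negative, I would bound it by \eqref{eq:B-upper}, using $s\le q^2$, where it is $O(1/q)$-small relative to $s/(q+1)$.

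Strictness in all cases with $r\notin\{0,1\}$ gives the equality characterization. The hypothesis $r\ge1-q^2$ is only used to keep $s\le q^2-1$ in that error bound.
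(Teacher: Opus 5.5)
Your treatment of $r\ge 0$ and of $-(q+1)\le r\le -1$ is correct; the latter, via \eqref{eq:B-small}, covers slightly more than the paper needs. The range $q+2\le s\le q^2-1$, however, has a genuine gap, exactly at the step you flagged. Doing the cancellation correctly, with $r=-s$, gives
\[
g(r)-\psi(r)=h(s):=\frac12\ln n+\frac{q}{2n}-\ln(q+1+s)+s\Bigl(A_q+\frac1{2n}\Bigr)-B_q\bigl(2s^2+(q+1)s\bigr).
\]
So the $B_q$-quadratic enters with a minus sign. It is also not an $O(1/q)$ correction. Since $B_q\sim\frac{1}{3q^3}$ (compare \eqref{eq:B-upper}), at $s=q^2-1$ it is asymptotically $\frac23$ of the linear term $s(A_q+\frac1{2n})\approx q$.

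Hence $h$ is not convex, and checking $h(q+2)>0$ and $h'(q+2)>0$ says nothing about the rest of the interval. Indeed, $h'(s)=A_q+\frac1{2n}-B_q(4s+q+1)-\frac1{q+1+s}$ turns negative as $s$ approaches $q^2$, because $4B_qs\approx\frac{4}{3q}$ exceeds $A_q\approx\frac1q$. Concretely, for $q=3$ one finds $h(7)\approx 0.302>h(8)\approx 0.294$.

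Moreover $h(s)\to-\infty$, and the inequality genuinely fails not far outside the permitted range. For $q=3$ one has $h(13)\approx-0.058$, so \eqref{eq:uniform-scalar} is false at $r=-13$. Thus the hypothesis $r\ge 1-q^2$ is not a bookkeeping device for an error term: the decisive estimate lives at the right endpoint $s=q^2-1$.

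Your plan is missing two things: (i) a structural reason why the minimum of $h$ on the interval is attained at an endpoint, and (ii) a sharp check at $s=q^2-1$. The paper supplies both in the $r$-variable.

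For (i), with $\Phi=\psi-g$ one has $\Phi'''(r)=-2/(q+1-r)^3<0$, so $\Phi'$ is concave. Together with $\Phi'(-\sqrt q)>0$, this makes $\Phi$ either increasing, or decreasing and then increasing, on $[1-q^2,-\sqrt q]$. Hence $\max\Phi=\max\{\Phi(1-q^2),\Phi(-\sqrt q)\}$.

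For (ii), at $r=1-q^2$ the required inequality is
$\ln q+\frac12\ln\bigl(1+\frac qn\bigr)-\frac{q}{2n}<(q-1)(q+1)\bigl(A_q+\frac1{2n}-B_q(q+1)(2q-1)\bigr)$. Its right-hand side is asymptotically only about $(q-1)/3$. The paper bounds it below by $(q-1)K_0$ with $K_0>\frac{q+4}{4q}$, using both \eqref{eq:uniform-coefficients} and \eqref{eq:B-upper}. It then proves $\ln q<\frac{(q-1)(q+4)}{4q}$ for $q\ge2$.

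Your version can be repaired the same way. The same concavity of $h'$, together with positivity of $h'$ at the left end, makes $h$ increasing and then possibly decreasing on $[q+2,q^2-1]$. So $h(q+2)>0$ and $h(q^2-1)>0$ suffice. The second check, however, needs estimates of exactly this sharpness, and it is absent from your proposal.
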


\begin{proof}
For a nonnegative integer $r$, the assertion is immediate. Indeed, the left-hand side of \eqref{eq:uniform-scalar} is zero for $r=0,1$ and is strictly negative for $r\ge2$ since $B_q>0$, whereas $\phi(r)=0$. Thus equality holds for $r=0,1$ and is strict for $r\ge2$.

Now suppose that $r<0$. Since $r\ge1-q^2$, we have $1-q^2\le r\le-1$. If $-\sqrt q\le r\leq-1$, then $\phi(r)=0$, since $\phi(x)=0$ for $x>-\sqrt q$ and $g(-\sqrt q)=0$. Moreover,
\[
1-r\le\sqrt q+1<q+2.
\]
Since $B_q>0$ and \eqref{eq:B-small} gives
\[
B_q(q+2)<\frac{1}{2n},
\]
we obtain
\[r(r-1)\left(B_q(1-r)-\frac{1}{2n}\right)<r(r-1)\left(B_q(q+2)-\frac{1}{2n}\right)<0.
\]
Hence \eqref{eq:uniform-scalar} is strict in this case.

It remains to consider $1-q^2\le r<-\sqrt q$. For convenience, we prove the stronger statement for all real $r\in[1-q^2,-\sqrt q]$. Since $r\le-\sqrt q$, we have $\phi(r)=g(r)$. A direct computation gives
\begin{align}
&r(r-1)\left(B_q(1-r)-\frac{1}{2n}\right)-\phi(r)\notag\\
&\qquad=\Phi(r):=\ln(q+1-r)-\frac12\ln n-\frac{q}{2n}+2B_qr^2+\left(A_q+\frac{1}{2n}-B_q(q+1)\right)r.
\label{eq:uniform-E}
\end{align}
We show that $\Phi(r)<0$ for all real $r\in[1-q^2,-\sqrt q]$.

Differentiating gives
\[
\Phi'(r)=-\frac{1}{q+1-r}+4B_qr+A_q+\frac{1}{2n}-B_q(q+1),
\qquad
\Phi'''(r)=-\frac{2}{(q+1-r)^3}<0.
\]
Thus $\Phi''(r)$ is strictly decreasing on $[1-q^2,-\sqrt q]$, and $\Phi'(r)$ is a strictly concave function  on this interval. Hence $\Phi'(r)$ is either monotone or first increases and then decreases. Using
\eqref{eq:AandB} together with $(q+1+\sqrt q)(q+1-\sqrt q)=n$, we obtain
\begin{align*}
2n\Phi'(-\sqrt q)
&=(2\sqrt q+1)-2nB_q(3q+4\sqrt q+1)\\
&>(2\sqrt q+1)-\frac{3q+4\sqrt q+1}{q+2}\\
&=\frac{2q(\sqrt q-1)+1}{q+2}>0,
\end{align*}
where the first inequality follows from \eqref{eq:B-small}.

Put $a=1-q^2$ and $b=-\sqrt q$. Since $\Phi'$ is concave on $[a,b]$ and $\Phi'(b)>0$, once $\Phi'$ becomes nonnegative, it remains positive to the right. Indeed, if $a\le u<v\le b$ and $\Phi'(u)\ge0$, then concavity gives
\[
\Phi'(v)\ge
\frac{b-v}{b-u}\,\Phi'(u)
+\frac{v-u}{b-u}\,\Phi'(b)>0.
\]
Consequently, $\Phi'$ is either nonnegative throughout $[a,b]$, or changes sign exactly once, from negative to positive. Thus $\Phi$ either increases throughout the interval or first decreases and then increases. In either case,
\[
\max_{r\in[a,b]}\Phi(r)
=\max\{\Phi(a),\Phi(b)\}.
\]
It therefore suffices to show that both endpoint values are negative.

At the endpoint $r=-\sqrt q$, since $\phi(-\sqrt q)=0$, \eqref{eq:B-small} yields
\begin{equation*}
\Phi(-\sqrt q)
=\sqrt q(\sqrt q+1)\left(B_q(\sqrt q+1)-\frac{1}{2n}\right)
< \sqrt q(\sqrt q+1)\left(B_q(q+2)-\frac{1}{2n}\right)<0.
\end{equation*}

For the other endpoint $r=1-q^2$, set
\[
K_0=(q+1)\left[\frac{1}{q+1}+\frac{q}{3(q+1)^3}+\frac{1}{2n}-\frac{(2(q+1)^2+q)(q+1)(2q-1)}{6n(q+1)^3}\right].
\]
By \eqref{eq:uniform-coefficients} and \eqref{eq:B-upper}, we have
\begin{equation}\label{xxx-1.2}
(q+1)\left(A_q+\frac{1}{2n}-B_q(q+1)(2q-1)\right)>K_0.
\end{equation}
Since $q^2-1=(q-1)(q+1)$ and $\frac{(q+1)^2}{n}=1+\frac{q}{n}$, substituting $r=1-q^2$ into \eqref{eq:uniform-E} gives
\begin{align*}
\Phi(1-q^2)
=\ln q+\frac12\ln\left(1+\frac{q}{n}\right)-\frac{q}{2n}-(q-1)(q+1)\left(A_q+\frac{1}{2n}-B_q(q+1)(2q-1)\right).
\end{align*}
Hence, by \eqref{xxx-1.2},
\[
\Phi(1-q^2)<\ln q+\frac12\ln\left(1+\frac{q}{n}\right)-\frac{q}{2n}-(q-1)K_0.
\]
Using $\ln(1+x)<x$ for $x>0$, we obtain
\begin{equation}\label{eq:E-endpoint}
\Phi(1-q^2)<\ln q-(q-1)K_0.
\end{equation}
A direct simplification gives
\begin{equation}\label{xxx-1.3}
K_0-\frac{q+4}{4q}=\frac{q^5+q^4+8q^3+7q^2-17q-12}{12q(q+1)^2n}>0,
\end{equation}
where the inequality holds since $q^5+q^4+8q^3+7q^2-17q-12=(q-2)(q^4+3q^3+14q^2+35q+53)+94$. Finally, for $x\ge1$, let
\[
F(x)=\frac{(x-1)(x+4)}{4x}-\ln x.
\]
We have $F(1)=0$ and
\[
F'(x)=\frac{(x-2)^2}{4x^2}\ge0\qquad(x\ge1),
\]
with equality only at $x=2$. Hence $F$ is strictly increasing on $[1,\infty)$, and in particular $F(q)>0$ for every $q\ge2$.
Hence, since $q\ge2$ and by \eqref{xxx-1.3}, we have
\[
\ln q<\frac{(q-1)(q+4)}{4q}<(q-1)K_0.
\]
Together with \eqref{eq:E-endpoint}, this yields $\Phi(1-q^2)<0$.

Both endpoint values are negative, and hence $\Phi(r)<0$ for $1-q^2\le r\le-\sqrt q$. Thus \eqref{eq:uniform-scalar} is strict in the remaining case. Combining this with the preceding cases, equality holds if and only if $r\in\{0,1\}$.
\end{proof}

\section{Proof of the main results}

We first prove Theorem~\ref{thm:uniform-bound} and then deduce Corollary~\ref{cor:edge-bound}. Throughout, let $n=q^2+q+1$, $m_q=\frac12q(q+1)^2$, and $h=m_q-e(G)$.

\begin{proof}[Proof of Theorem~\ref{thm:uniform-bound}]
Recall from Section~2 that $Q(G)$ is symmetric. Hence there exists an orthogonal matrix
$U=(u_1,u_2,\ldots,u_n)$ such that
\begin{equation}\label{hhhh-1.2}
Q(G)=U\operatorname{diag}(\mu_1,\mu_2,\ldots,\mu_n)U^{\top}.
\end{equation}
Since $Q(G)\mathrm{j}_n=(q+1)\mathrm{j}_n$, we choose $\mu_1=q+1$ and $u_1=\frac{1}{\sqrt n}\mathrm{j}_n$. For each $1\le i\le n$,
\[
r_i=Q(G)_{ii}=\sum_{k=1}^n U_{ik}^2\mu_k,
\qquad
\sum_{k=1}^n U_{ik}^2=1.
\]
By Lemma~\ref{lem:uniform-tail}, the function $\phi$ is convex. Hence Jensen's inequality gives
\[
\phi(r_i)\le\sum_{k=1}^n U_{ik}^2\phi(\mu_k).
\]
Summing over $i=1,2,\ldots,n$ and using
$\sum_{i=1}^n U_{ik}^2=1$, we obtain
\[
\begin{aligned}
\sum_{i=1}^n\phi(r_i)\le\sum_{i=1}^n\sum_{k=1}^n U_{ik}^2\phi(\mu_k)=\sum_{k=1}^n\phi(\mu_k)\sum_{i=1}^nU_{ik}^2=\sum_{k=1}^n\phi(\mu_k).
\end{aligned}
\]
Since $\mu_1=q+1$ and $\phi(q+1)=0$, it follows that
\[
\sum_{i=1}^n\phi(r_i)\le\sum_{k=2}^n\phi(\mu_k).
\]
Recall from Lemma~\ref{lem:uniform-tail} that
$g(x)=P(x)-\ln(q+1-x)$. Since $\mu_k<q+1$ for
$2\le k\le n$ and $\phi(x)\le g(x)$ for every $x<q+1$, we obtain
\begin{equation}\label{eq:uniform-Jensen}
\sum_{k=2}^n g(\mu_k)\ge\sum_{k=2}^n\phi(\mu_k)\ge\sum_{i=1}^n\phi(r_i).
\end{equation}

Observe that $\ln(q+1-\mu_k)=P(\mu_k)-g(\mu_k)$. Using \eqref{eq:Q-mtt}, we obtain
\begin{equation}\label{xxx-2.2}
\begin{split}
\ln\frac{\tau(G)}{n^{(n-3)/2}}&=\sum_{k=2}^{n}\ln(q+1-\mu_k)-\ln n-\ln n^{(n-3)/2}\\
&=\sum_{k=2}^{n}\left(P(\mu_k)-g(\mu_k)\right)-\frac{n-1}{2}\ln n\\
&=-A_q\sum_{k=2}^n\mu_k-\frac{1}{2n}\sum_{k=2}^n(\mu_k^2-q)-B_q\sum_{k=2}^n(\mu_k^3-q\mu_k)-\sum_{k=2}^n g(\mu_k).
\end{split}
\end{equation}
Recall that $B_q>0$ in \eqref{eq:B-upper}. By Lemma~\ref{lem:moments} and \eqref{eq:uniform-Jensen}, we have
\begin{equation}\label{xxx-2.1}
\begin{split}
\ln\frac{\tau(G)}{n^{(n-3)/2}}
&\le-2A_qh-\frac{R_2}{2n}-B_qR_3-\sum_{i=1}^n\phi(r_i)\\
&=-2A_qh+\sum_{i=1}^n\left[r_i(r_i-1)\left(B_q(1-r_i)-\frac{1}{2n}\right)-\phi(r_i)
\right].
\end{split}
\end{equation}
Since $G$ is simple and connected, $d_i\le n-1=q^2+q$, and hence $r_i=q+1-d_i\ge1-q^2$ for each $1\le i\le n$. Therefore, by Lemma~\ref{lem:uniform-scalar} and \eqref{xxx-2.1},
\[
\ln\frac{\tau(G)}{n^{(n-3)/2}}
\le -2A_qh.
\]
Exponentiating gives \eqref{eq:intro-uniform-bound}.

We next determine the equality case. Suppose that equality holds in \eqref{eq:intro-uniform-bound}. Since every summand in \eqref{xxx-2.1} is nonpositive, equality in Lemma~\ref{lem:uniform-scalar} must hold for every $1\le i\le n$. Hence $r_i\in\{0,1\}$ for each  $1\le i\le n$. Consequently,
\[R_2=R_3=0\qquad\text{and}\qquad\phi(r_i)=0,\qquad 1\le i\le n.\]

Returning to \eqref{xxx-2.2} and using $\sum\limits_{k=2}^n\mu_k=2h$ and $\sum\limits_{k=2}^n(\mu_k^2-q)=R_2=0$, equality in \eqref{eq:intro-uniform-bound} gives
\[B_q\sum_{k=2}^n(\mu_k^3-q\mu_k)+\sum_{k=2}^n g(\mu_k)=0.\]
By Lemma~\ref{lem:moments},
\[
\sum_{k=2}^n(\mu_k^3-q\mu_k)\ge R_3=0,
\]
while $g(\mu_k)\ge0$ for every $2\le k\le n$. Since $B_q>0$, it follows that $\sum_{k=2}^n g(\mu_k)=0$. Therefore
$g(\mu_k)=0$ for each $2\le k\le n$. By Lemma~\ref{lem:uniform-tail}, $g(x)=0$ if and only if $x\in\{-\sqrt q,\sqrt q\}$. Then $\mu_k\in\{-\sqrt q,\sqrt q\}$ for each $2\le k\le n$. Moreover, the matrix $Q(G)^2$ has eigenvalue $(q+1)^2$ with multiplicity $1$ and eigenvalue $\mu_k^2=q$ with multiplicity $n-1$. Hence, using the orthogonal diagonalization in \eqref{hhhh-1.2},
\[
\begin{aligned}
Q(G)^2&=U\operatorname{diag}\bigl((q+1)^2,q,\ldots,q\bigr)U^{\top}\\
&=U\left[qI_n+\bigl((q+1)^2-q\bigr)\operatorname{diag}(1,0,\ldots,0)\right]U^{\top}\\
&=qI_n+\bigl((q+1)^2-q\bigr)u_1u_1^{\top}\\
&=qI_n+\frac{(q+1)^2-q}{n}J_n\\
&=qI_n+J_n,
\end{aligned}
\]
where $u_1=\frac{1}{\sqrt n}\mathrm{j}_n$ and $(q+1)^2-q=n$.

Since $r_i\in\{0,1\}$ for every $1\le i\le n$, the matrix
\[
Q(G)=A(G)+\operatorname{diag}(r_1,r_2,\ldots,r_n)
\]
is a symmetric $0$--$1$ matrix. Applying Lemma~\ref{lem:matrix-polarity} with $M=Q(G)$, and noting that replacing the diagonal entries of $Q(G)$ by zero yields the adjacency matrix $A(G)$, we conclude that $G$ is a polarity graph arising from a projective plane of order $q$.

Conversely, suppose that $G$ is a polarity graph arising from a projective plane of order $q$. By Lemma \ref{lem:London-polarity} and $h=m_q-e(G)$,
\[\tau(G)=n^{(n-3)/2}\left(\frac{q+1-\sqrt q}{q+1+\sqrt q}\right)^{h/\sqrt q}=n^{(n-3)/2}\exp(-2A_qh).\]
Thus equality holds in \eqref{eq:intro-uniform-bound}. This completes the proof.
\end{proof}

We can now deduce Corollary~\ref{cor:edge-bound} immediately.

\begin{proof}[Proof of Corollary~\ref{cor:edge-bound}]
Note that $e(G)\leq \frac{1}{2}q(q+1)^2$, that is, $h=m_q-e(G)\ge0$. By Theorem~\ref{thm:uniform-bound},
\[\tau(G)\le n^{(n-3)/2}\exp(-2A_qh)\le n^{(n-3)/2},\]
since $A_q>0$.

Suppose that equality holds. Then $h=0$, and equality must also hold in Theorem~\ref{thm:uniform-bound}. Hence $G$ is a polarity graph of a projective plane of order $q$. By Lemma~\ref{lem:moments},
\[\sum_{i=1}^n r_i=q+1+2h=q+1.\]
By Lemma~\ref{pola-de}, this is exactly the number of absolute points of the corresponding polarity. Hence $G$ is an orthogonal polarity graph.

Conversely, suppose that $G$ is an orthogonal polarity graph. By Lemma~\ref{pola-de}, $\sum_{i=1}^n r_i=q+1$.
On the other hand, Lemma~\ref{lem:moments} gives $\sum_{i=1}^n r_i=q+1+2h$. Thus $h=0$. Since $G$ is a polarity graph, the equality statement in Theorem~\ref{thm:uniform-bound} yields
\[\tau(G)=n^{(n-3)/2}.\]
Therefore equality holds if and only if $G$ is an orthogonal polarity graph.
\end{proof}

Corollary~\ref{cor:nonexceptional-orders} follows immediately from Corollary~\ref{cor:edge-bound} and Theorem~\ref{th-1}.
For prime powers $q$, the existence of orthogonal polarity graphs $ER_q$ then yields Corollary \ref{cor:london-conjecture}.

\section{Concluding remarks}
In this paper, we studied a Tur\'an-type extremal problem for the number of spanning trees in connected $C_4$-free graphs. Let $n=q^2+q+1$ and $m_q=\frac12q(q+1)^2$. We proved the uniform estimate
\[\tau(G)\le n^{(n-3)/2}\exp\bigl(-2A_q(m_q-e(G))\bigr)\]
for every connected $C_4$-free graph $G$ on $n$ vertices. The proof combines the $C_4$-free codegree condition with spectral moment estimates for $Q(G)=(q+1)I_n-L(G)$, together with Hermite interpolation. The equality case is rigid: equality in the uniform estimate holds precisely for polarity graphs arising from projective planes of order $q$.

As an immediate consequence, every connected $C_4$-free graph $G$ with $e(G)\le m_q$ satisfies
\[\tau(G)\le n^{(n-3)/2},\]
with equality if and only if $G$ is an orthogonal polarity graph. Combining this with F\"uredi's extremal edge bound proves London's conjecture for every prime power $q\notin\{7,9,11,13\}$.

For $q\in\{7,9,11,13\}$, our theorem already proves the desired bound whenever $e(G)\le m_q$; the remaining task is to handle graphs with $e(G)>m_q$. It would therefore be interesting to determine whether the spanning tree bound can be proved for these four cases without first resolving the corresponding extremal edge problem for $C_4$.

A natural direction for further research is to consider the analogous spanning tree extremal problem for general $F$-free graphs. More precisely, given a fixed graph $F$, one may ask which $n$-vertex $F$-free graphs maximize the number of spanning trees, and whether the extremal graphs for the classical Tur\'an problem for $F$ also maximize the number of spanning trees. Graphs excluding longer cycles, such as $C_{2k}$-free and $C_{2k+1}$-free graphs for $k\ge2$, provide natural classes for further investigation.

\section*{Declarations}

\noindent\textbf{Generative AI and AI-assisted technologies.}
During the preparation of this work, the authors used ChatGPT (OpenAI) to assist in exploring potential approaches to the proof of Theorem \ref{thm:uniform-bound}. All AI-generated suggestions were verified and refined by the authors, who take full responsibility for the correctness and originality of the paper.
\smallskip

\noindent\textbf{Funding.}
The work was supported by the National Natural Science Foundation of China (Grant No.\ 12271251).
\smallskip

\noindent\textbf{Conflict of interest.}
The authors declare that they have no conflict of interest.
\smallskip

\noindent\textbf{Data and code availability.}
No datasets were generated or analyzed during this study.

\end{document}